\def\sbar{\accentset{{\cc@style\underline{\mskip16mu}}}}
\def\mbar{\accentset{{\cc@style\underline{\mskip32mu}}}}
\def\lbar{\accentset{{\cc@style\underline{\mskip48mu}}}}
\theoremstyle{definition}
\newtheorem{Def}{Definition}[section]
\newtheorem{Rem}[Def]{Remark}
\newtheorem{Expect}[Def]{Expectation}
\theoremstyle{plain}
\newtheorem{Thm}[Def]{Theorem}
\newtheorem{Prop}[Def]{Proposition}
\newtheorem{Lem}[Def]{Lemma}
\newtheorem*{thm}{Theorem}
\newtheorem{Fact}[Def]{Fact}
\newcommand{\an}[2][n]{\left(#2\mathrel;#1\right)}
\newcommand{\pd}[1]{\dfrac{\partial}{\partial #1}}
\newcommand{\genpdd}[2]{\dfrac{\partial^2}{\partial{#1}\partial{#2}}}
\newcommand{\pdd}[1]{\dfrac{\partial^2}{\partial{#1}^2}}
\newcommand{\fps}{[\![z_1,z_2,z_3]\!]}
\newcommand{\diff}[1]{\mathop{\mathcal{D}_{#1}}}
\newcommand{\diffchar}[2]{\mathop{\mathcal{D}_{#1}^{(#2)}}}
\newcommand{\diffsimp}[2]{\mathop{\mathcal{D}_{#1,\,#2}}}
\newcommand{\ALseries}{\mathcal{F}(a,b_1,b_2,b_3,c\,;z_1,z_2,z_3)}
\newcommand{\subALseriesdef}{%
\frac{\an[\sum_{k=1}^3 n_k]{a} \prod_{k=1}^3 \an[n_k]{b_k}}%
{\an[\sum_{k=1}^3 n_k]{c} \prod_{k=1}^3 \an[n_k]{1}}%
}
\newcommand{\ALseriesdef}{%
\sum_{n_1=0}^\infty\sum_{n_2=0}^\infty\sum_{n_3=0}^\infty\subALseriesdef%
}
\newcommand{\setsymbol}[2]{\left\{ #1 \mathrel{} \middle| \mathrel{} #2 \right\}}
\newcommand{\setsymbolin}[3]{\left\{ #1 \in #2 \mathrel{} \middle| \mathrel{} #3 \right\}}
\NewDocumentCommand{\zzz}{o o o}{%
    \IfValueTF{#1}{%
        z_1^{n_1#1}z_2^{n_2#2}z_3^{n_3#3}
    }{
        \IfValueTF{#2}{%
            z_1^{n_1}z_2^{n_2#2}z_3^{n_3#3}
        }{%
            \IfValueTF{#3}{%
                z_1^{n_1}z_2^{n_2}z_3^{n_3#3}
            }{%
                z_1^{n_1}z_2^{n_2}z_3^{n_3}
            }
        }
    }
}
\NewDocumentCommand{\nnn}{o o o}{%
    \IfValueTF{#1}{%
        n_1{#1},n_2{#2},n_3{#3}
    }{%
        \IfValueTF{#2}{%
            n_1,n_2{#2},n_3{#3}
        }{%
            \IfValueTF{#3}{%
                n_1,n_2,n_3{#3}
            }{%
                n_1,n_2,n_3
            }
        }
    }
}
\NewDocumentCommand{\truncatedAL}{o o}{%
    \IfValueTF{#1}{%
        \IfValueT{#2}{%
            \Tilde{\mathcal{F}}_{#1,#2}
        }
    }{
        \Tilde{\mathcal{F}}_{i,j}
    }
}
\NewDocumentCommand{\supp}{o o}{%
    \IfValueTF{#1}{%
        \IfValueT{#2}{%
            \mathsf{supp}^{(#1,#2)}
        }
    }{
        \mathsf{supp}^{(i,j)}
    }
}
\NewDocumentCommand{\Legendresymbol}{o o}{%
    \IfValueTF{#1}{%
        \IfValueT{#2}{%
            \genfrac{(}{)}{}{}{#1}{#2}
        }
    }{
            \genfrac{(}{)}{}{}{-1}{p}
    }
}
\newcommand{\NNI}{\mathbb{Z}_{\ge 0}}
\begin{document}

\title
{\bf The multiplicity-one theorem for the superspeciality of curves of genus two}
\author{
Shushi Harashita\thanks{Graduate School of Environment and Information Sciences, Yokohama National University.
E-mail: \texttt{harasita@ynu.ac.jp}}
\ and
Yuya Yamamoto\thanks{Graduate School of Environment and Information Sciences, Yokohama National University.
E-mail: \texttt{yamamoto-yuya-cm@ynu.jp}}
}
\maketitle
\begin{abstract}
\noindent\quad 
Igusa proved in 1958 that the polynomial determining the supersingularity of elliptic curve\textcolor{black}{s} in Legendre form is separable.
In this paper, we get an analogous result for curves of genus $2$ in Rosenhain form.
More precisely we show that the ideal determining the superspeciality of the curve has multiplicity one at every superspecial point.
Igusa used a Picard-Fucks differential operator annihilating a Gau{\ss}  hypergeometric series.
We shall use \textcolor{black}{the} Lauricella system (of type D) of hypergeometric differential equations in three variables.

{\bf Keywords:} algebraic curve\textcolor{black}{s},
superspecial \textcolor{black}{curves}, hypergeometric series, positive characteristic

{\bf MSC 2020:} 14H10, 33C65, 14G17, 11G20
\end{abstract}

\section{Introduction}
Let $K$ be a field of characteristic $p>2$.
Consider an elliptic curve of Legendre form
\[
E:\quad y^2 = x(x-1)(x-t)
\]
over $K$. It is well known (cf.~\cite[p.255]{Deuring} and \cite[Theorem V.4.1]{Silverman}) that $E$ is supersingular if and only if
$H_p(t) =0$, where
\[
H_p(t) := \sum_{i=0}^{(p-1)/2}{\binom{(p-1)/2}{i}}^2
t^i.
\]
A celebrated result by Igusa \cite{Igusa} says that
the polynomial $H_p(t)$ is separable. His proof uses the fact that
$H_p(z)$ \textcolor{black}{satisfies} the differential equation
\[
\mathop{\mathcal D} {H_p(z)}= 0
\]
with
\begin{equation}\label{PDE-GenusOne}
\mathop{\mathcal D} = z(1-z) \dfrac{d\textcolor{black}{^2}}{dz^2} + (c-(a+b+1)z)\dfrac{d}{dz}-ab
\end{equation}
for $a=1/2$, $b=1/2$ and $c=1$ (cf.~Igusa~\cite{Igusa}).
For $a,b,c \in \mathbb{C}$ with $-c \notin \mathbb{Z}_{\ge 0}$,
over $\mathbb C$
the differential equation \eqref{PDE-GenusOne} is satisfied by
the Gau{\ss} hypergeometric series
\[
F(a,b,c\,;z) \coloneqq \sum_{n=0}^\infty \dfrac{\an{a}\an{b}}{\an{c}\an{1}}z^n,
\]
where $\an{x} = x(x+1)(x+2)\cdots(x+n-1)$. 

In this paper, we show an analogous result in the case of genus $2$.
For smaller families of genus-$2$ curves,
similar results have been obtained in \cite[Proposition 1.14]{IKO}.
In the present work, however, we establish such a result for an entire family of genus-$2$ curves.
Let $C$ be the normalization of the projective model of
\[
C:\quad y^2 = f(x):=x(x-1)(x-\lambda_1)(x-\lambda_2)(x-\lambda_3)
\]
with $\#\{0,1,\lambda_1,\lambda_2,\lambda_3\} = 5$.
Note that $C$ is a nonsingular curve of genus 2.
A curve $C$ is called {\it superspecial} if its Jacobian ${\rm Jac}(C)$
is isomorphic to a product of supersingular elliptic curves over an algebraically closed field. This is equivalent to that
the Cartier operator on $H^0(C,{\varOmega}_C)$ is zero (cf.~\cite[Theorem 4.1]{Nygaard}). The matrix (called Cartier-Manin matrix) representing the operator with respect to the basis $\{dx/y, xdx/y\}$ is described as
\[
\begin{pmatrix}
    c_{p-1} & c_{p-2}\\
    c_{2p-1} & c_{2p-2}
\end{pmatrix},
\]
where $c_k$ is the $x^k$-coefficient of $f(x)^{(p-1)/2}$, see \cite[p.~79]{Manin}.
We shall regard $\lambda_1,\lambda_2,\lambda_2$ as indeterminates
and consider  $c_{ip-j}$
as polynomials in $\lambda_1,\lambda_2,\lambda_2$ for $1\le i,j\le 2$.
The space $V(c_{p-1},c_{p-2},c_{2p-1},c_{2p-2})$ consisting of points
where $c_{ip-j}$ vanish for all $i,j$ with $1\leq i,j\leq 2$
is called the {\it superspecial locus}, which is known to consist of finite points.
The aim of this paper is to prove the ideal generated by
$c_{ip-j}$ for $1\le i, j\le 2$ is of multiplicity one
at all point of $V(c_{p-1},c_{p-2},c_{2p-1},c_{2p-2})$.
To achieve this we use 
\textcolor{black}{the} Lauricella hypergeometric series of type (D) in three variables and the partial differential equations they satisfy.
Lauricella hypergeometric series of type (D) in three variables is
defined as follows (cf.~Definition \ref{def:Appell}):
\begin{equation}\label{eq:L-hyp-geom-series}
\ALseries
\coloneqq 
 \sum_{n_1=0}^\infty \, \sum_{n_2=0}^\infty \, \sum_{n_3=0}^\infty A_{n_1,n_2,n_3}\zzz,
\end{equation}
where
\begin{equation}
    A_{n_1,n_2,n_3}:=\subALseriesdef
\end{equation}
with $a,b_1,b_2,b_3,c \in \mathbb{C},-c \notin {\mathbb{Z}}_{\ge 0}$.

In this paper, the hypergeometric series is considered as an element of
the ring of formal power series in $z_1, z_2$ and $z_3$.
The hypergeometric series satisfies the partial
differential equations $\diff{\ell} w=0$ for $\ell=1,2,3$ 
and $\diffsimp{k}{\ell} w=0$ for $1\le k<\ell\le 3$, where
\begin{eqnarray}
\diff{\ell}
&=&z_\ell(1-z_\ell)\pdd{z_\ell}+\sum_{\substack{1 \le k \le 3,\\k \neq \ell}}
{ z_k(1-z_\ell)\genpdd{z_\ell}{z_k} }  \notag\\
&&+\left(c-(a+b_\ell+1)z_\ell\right)\pd{z_\ell}
-\sum_{\substack{1 \le k \le 3,\\k \neq \ell}}{ b_\ell z_k \pd{z_k} }-ab_\ell
\label{eq:Dl}
\end{eqnarray}
and
\begin{equation}
\diffsimp{\ell}{m}:=
(z_\ell-z_m)\dfrac{\partial^2}{\partial z_\ell\partial z_m}
-b_m\dfrac{\partial}{\partial z_\ell}
+b_\ell\dfrac{\partial}{\partial z_m}.
\end{equation}


Let $j\in\{1,2\}$.
Let $\diffchar{\ell}{j}$ be
the above partial differential operators ${\mathcal D}_\ell$
associated to 
$a = 5/2 - j$, $(b_1,b_2,b_3)=(1/2,1/2,1/2)$ and $c = 3-j$.
The first main theorem is
\begin{restatable}{theor}{thmA}\label{thm:PDEinPositiveChar}
For every $i,j$ with $1\le i,j \le 2$, the entry $c_{ip-j}$ of the Cartier-Manin matrix
satisfies
\begin{equation*}
\diffchar{\ell}{j} c_{ip-j} = 0
\end{equation*}
for $\ell=1,2,3$ and
\begin{equation*}
\diffsimp{\ell}{m} c_{ip-j} = 0
\end{equation*}
for $1\le \ell < m \le 3$.
\end{restatable}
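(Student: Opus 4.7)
The plan is to give an explicit closed-form expansion of $c_{ip-j}$ as a polynomial in $\lambda_1,\lambda_2,\lambda_3$, identify this polynomial modulo $p$ with a natural truncation $\truncatedAL$ of the Lauricella series at parameters $(5/2-j,1/2,1/2,1/2,3-j)$, and then derive the PDEs from the coefficient recurrences that $\mathcal{F}$ satisfies over $\mathbb{C}$, handling the boundary terms created by the finite truncation.

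Set $h=(p-1)/2$. Expanding $f(x)^h=x^h(x-1)^h\prod_{k=1}^3(x-\lambda_k)^h$ by the binomial theorem and extracting the coefficient of $x^{ip-j}$ gives
\[
c_{ip-j}=\sum(-1)^{i_0+i_1+i_2+i_3}\binom{h}{i_0}\binom{h}{i_1}\binom{h}{i_2}\binom{h}{i_3}\,\lambda_1^{h-i_1}\lambda_2^{h-i_2}\lambda_3^{h-i_3},
\]
summed over $(i_0,i_1,i_2,i_3)\in\{0,\dots,h\}^4$ with $\sum_k i_k=ip-j-h$. Setting $n_k=h-i_k$ supports $c_{ip-j}$ on an explicit finite index set $\supp\subset\NNI^3$. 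Using the classical congruences $(1/2)_n/n!\equiv(-1)^n\binom{h}{n}\pmod p$ (for $0\le n\le p-1$) and $(3/2)_s/(2)_s\equiv 2(-1)^{s+1}\binom{h}{s+1}\pmod p$ identifies, up to a fixed unit in $\mathbb{F}_p$, each coefficient of $c_{ip-j}$ with the Lauricella coefficient $A_{n_1,n_2,n_3}$, yielding $c_{ip-j}\equiv\pm\truncatedAL(\lambda_1,\lambda_2,\lambda_3)\pmod p$.

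Over $\mathbb{C}$, the identity $\diffchar{\ell}{j}\mathcal{F}=0$ is equivalent to the coefficient recurrence $(n_\ell+1)(s+3-j)A_{n+e_\ell}=(n_\ell+1/2)(s+5/2-j)A_n$ with $s=n_1+n_2+n_3$, and $\diffsimp{\ell}{m}\mathcal{F}=0$ to a symmetric bilinear identity linking $A_{n+e_\ell}$ and $A_{n+e_m}$. After clearing powers of $2$, these are integral identities, hence hold modulo $p$. Consequently, applying $\diffchar{\ell}{j}$ or $\diffsimp{\ell}{m}$ to $\truncatedAL$ produces at each monomial $\lambda^n$ exactly the relevant recurrence read modulo $p$; this congruence is automatic whenever all shifted indices involved lie in $\supp$.

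The main obstacle is to verify that the \emph{boundary} contributions --- those indices $n$ for which one shifted index $n+e_\ell$ (or $n+e_m$) leaves $\supp$ and contributes $0$ on one side of the recurrence --- also vanish modulo $p$. The two arithmetic inputs are $\binom{h}{n}\equiv 0\pmod p$ for $n>h$ and $2h+1=p\equiv 0\pmod p$. Whenever $n_\ell=h$, the factor $n_\ell+1/2=h+1/2$ satisfies $2(h+1/2)=p$, so $n_\ell+1/2\equiv 0\pmod p$, killing the surviving side. Whenever $s$ crosses an $s$-threshold of $\supp$ (the upper threshold $h+j-2$ for $i=2$, the lower threshold $2h+j-1$ for $i=1$), one of the factors $s+5/2-j$ or $s+3-j$ evaluates to $h+1/2$ or $2h+1=p$, again producing a factor of $p$. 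A case-by-case check across $(i,j)\in\{1,2\}^2$ and the five operators ($\diffchar{\ell}{j}$ for $\ell=1,2,3$ and $\diffsimp{\ell}{m}$ for $1\le\ell<m\le 3$) completes the argument; the chief technical effort lies in tracking how the support thresholds interact with the relevant prefactor in each combination.
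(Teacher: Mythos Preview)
Your proposal is correct and follows essentially the same route as the paper: identify $c_{ip-j}$ with a nonzero scalar times the truncation $\truncatedAL$, then verify that the coefficient recurrence encoding $\diffchar{\ell}{j}w=0$ (and the analogous one for $\diffsimp{\ell}{m}$) persists modulo $p$ on the truncated support, with the boundary cancellations coming from the two arithmetic facts $h+\tfrac12\equiv 0$ and $2h+1=p\equiv 0$. The paper's proof carries out exactly this five-case boundary analysis for $\diffchar{1}{j}$ (your ``support thresholds'' are its Cases 2--4) and then treats $\diffsimp{1}{2}$ in the same spirit.

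The one genuine difference is expository rather than mathematical: you re-derive the identification $c_{ip-j}\equiv(\text{unit})\cdot\truncatedAL$ from scratch via the binomial expansion and the congruences $(1/2)_n/n!\equiv(-1)^n\binom{h}{n}$, whereas the paper imports this identity from Ohashi--Harashita (their Fact after Definition~\ref{def:Truncation}) together with Lemma~\ref{lem:simplification} for the explicit constant $(-1)^{(p-1)/2}j/i$. Your direct computation makes the argument self-contained, while the paper's citation keeps the proof shorter; the downstream boundary analysis is the same in both.
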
 

As an application of this theorem and the contiguity relations obtained in Subsection \ref{subsec:Contiguity}, we have the second theorem,
which can be \textcolor{black}{regarded} as an analogous result in the genus-two case of Igusa's result \cite{Igusa} in the genus-one case.
We call this theorem {\it the multiplicity-one theorem for the superspeciality of the curve $y^2=x(x-1)(x-z_1)(x-z_2)(x-z_3)$.}

\begin{restatable}{theor}{thmB}\label{thm:MultiplicityOne}
The scheme defined by the ideal generated by
\[
c_{p-1}, c_{2p-1}, c_{p-2}, c_{2p-2}
\]
of ${\mathbb F}_p[z_1,z_2,z_3]$ is reduced.
(This is equivalent to saying that the scheme is nonsingular.
It is well-known that the scheme is zero-dimensional.)
\end{restatable}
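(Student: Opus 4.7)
The scheme $V := V(c_{p-1}, c_{2p-1}, c_{p-2}, c_{2p-2})$ is a zero-dimensional subscheme of $\mathbb{A}^3_{\mathbb{F}_p}$, so reducedness at a closed point $P_0 = (\alpha_1, \alpha_2, \alpha_3)$ amounts to saying that the four differentials $dc_{ip-j}|_{P_0}$ span the entire cotangent space — equivalently, the $4 \times 3$ Jacobian $J(P_0) = \bigl(\partial c_{ip-j}/\partial z_k(P_0)\bigr)$ has rank $3$. My plan is to establish this rank condition by combining an Igusa-style propagation argument based on Theorem \ref{thm:PDEinPositiveChar} with the contiguity relations of Subsection \ref{subsec:Contiguity}.

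\noindent\textbf{Individual non-degeneracy.} A preliminary step is to check that every $P_0 \in V$ lies in the open stratum $\alpha_\ell \ne 0, 1$ and $\alpha_\ell \ne \alpha_m$ for $\ell \ne m$, which is where the Rosenhain quintic is separable; this should be verified directly by inspecting the leading terms of the $c_{ip-j}$ on the boundary locus. Now fix $f = c_{ip-j}$ for some $(i,j)$. I claim $f$ cannot vanish at $P_0$ together with all its first partial derivatives. Indeed, if it did, then evaluating $\diffsimp{\ell}{m} f = 0$ at $P_0$ yields $(\alpha_\ell - \alpha_m)\,\partial^2 f/\partial z_\ell \partial z_m(P_0) = 0$, so the open condition forces every mixed second derivative of $f$ at $P_0$ to vanish. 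Substituting into $\diffchar{\ell}{j} f(P_0) = 0$ and invoking $\alpha_\ell(1 - \alpha_\ell) \ne 0$ then forces the pure second derivatives to vanish too. Differentiating the PDEs once and applying the same cancellation at $P_0$ propagates the vanishing to third order, and an induction on the multi-index order $|\alpha|$ yields $\partial^\alpha f(P_0) = 0$ for all $\alpha$; since $f$ is a polynomial, this gives $f \equiv 0$, contradicting the non-triviality of $c_{ip-j}$. Hence each gradient $\nabla c_{ip-j}(P_0)$ is non-zero.

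\noindent\textbf{Upgrading to rank three.} To pass from non-vanishing of each individual gradient to maximal rank of $J(P_0)$, I invoke the contiguity relations of Subsection \ref{subsec:Contiguity}, which express every $\partial c_{ip-j}/\partial z_k$ as a rational-function combination of the four generators $c_{i'p-j'}$ together with a small number of explicit auxiliary Cartier-Manin-type polynomials at shifted parameters. Evaluating at $P_0$ kills the $c_{i'p-j'}$ contributions, so $J(P_0)$ becomes an explicit $4 \times 3$ matrix whose entries are the auxiliary polynomials evaluated at $P_0$ multiplied by known rational coefficients. Verifying by direct computation that this matrix has rank $3$ for every point on the open Rosenhain stratum then completes the proof.

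\noindent\textbf{Main obstacle.} The technical heart is this last rank verification: one must prove that no non-trivial $\mathbb{F}_p$-linear relation among the four rows can hold at any point with $\alpha_\ell \notin \{0, 1\}$ and $\alpha_\ell \ne \alpha_m$. This decomposes into controlling the mod-$p$ non-vanishing of the Pochhammer-symbol factors at $a = 5/2 - j$, $c = 3 - j$ that appear in the contiguity coefficients, and exhibiting a non-vanishing $3 \times 3$ minor of the resulting $4 \times 3$ matrix via a polynomial identity. I expect the first of these to reduce to an elementary divisibility check, while the second will require the bulk of the combinatorial work; pairing the contiguity relations back with the PDEs of Theorem \ref{thm:PDEinPositiveChar} should reduce it to a small closed-form identity in the $\alpha_k$.
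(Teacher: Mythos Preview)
Your reduction and individual non-degeneracy steps are in the right spirit, but the key step---``upgrading to rank three''---rests on a misreading of what the contiguity relations actually say, and as written the plan does not go through.

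\textbf{What the contiguity relations do and do not give.} You describe them as expressing each $\partial c_{ip-j}/\partial z_k$ in terms of the four generators plus ``auxiliary Cartier--Manin-type polynomials at shifted parameters.'' That is not their content. Theorem~\ref{thm:ContiguityRelation} gives only two identities:
\[
\sum_{k=1}^3 (z_k^2-z_k)\,\partial_k c_{ip-1} = -\tfrac12(z_1+z_2+z_3-2)c_{ip-1}-\tfrac12 c_{ip-2},
\qquad
\sum_{k=1}^3 (1-z_k)\,\partial_k c_{ip-2} = \tfrac12(c_{ip-1}+c_{ip-2}).
\]
There are no shifted-parameter auxiliary polynomials. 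At a superspecial point the right-hand sides vanish, so what you get is not a formula for individual entries of $J(P_0)$ but two \emph{row relations}: writing $w_1,w_2,w_3$ for the rows of $J(P_0)$, the combination $\sum_k(\lambda_k-\lambda_k^2)w_k$ vanishes on the first two columns and $\sum_k(1-\lambda_k)w_k$ vanishes on the last two. Neither relation by itself forces all four entries of a row combination to vanish, and there is no ``explicit $4\times 3$ matrix'' to compute directly. Your proposed Pochhammer/minor computation therefore has no target.

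\textbf{What is actually needed.} Two ingredients are missing from your outline. First, your non-degeneracy step should be strengthened: since $c_{p-j}$ and $c_{2p-j}$ satisfy the \emph{same} system $\diffchar{\ell}{j}$, $\diffsimp{\ell}{m}$ (the operators depend on $j$ but not on $i$), your propagation argument applies to any linear combination $\alpha c_{p-j}+\beta c_{2p-j}$ and shows that $v_{1,j}$ and $v_{2,j}$ are linearly independent, not merely individually nonzero (this is Lemma~\ref{lem:same_j}). Second, the rank-three conclusion is obtained by contradiction: assuming $\operatorname{rank} J(P_0)\le 2$, one first argues (using one contiguity relation and Lemma~\ref{lem:same_j}) that \emph{every} $2\times 2$ minor of each block $(v_{1,j},v_{2,j})$ is nonsingular; this in turn promotes each partial row relation above to a full relation $\sum_k(1-\lambda_k)w_k=0$ and $\sum_k(\lambda_k-\lambda_k^2)w_k=0$. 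Eliminating $w_3$ between these two then forces $w_1,w_2$ to be dependent, because $(1-\lambda_1)(\lambda_2-\lambda_2^2)-(1-\lambda_2)(\lambda_1-\lambda_1^2)=(\lambda_2-\lambda_1)(1-\lambda_1)(1-\lambda_2)\ne 0$ on the open stratum---contradicting the $2\times 2$ minor claim. This interplay between the two contiguity relations and the strengthened column independence is the heart of the argument, and it is absent from your plan.
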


This paper is organized as follows.
In Section 2, we review some facts, which will be used in the \textcolor{black}{later} sections.
In Section 3, we restate the main theorems.
In Section 4, we prove the first main theorem. 
In Section 5, we prove the second main theorem. For this, we prove
some relations of
entries of Cartier-Manin matrix
with some partial differential operators.
This can be called contiguity relations.
This was another key ingredient of the proof of the second main theorem.


\subsection*{Acknowledgements}
\textcolor{black}{The authors thank the referee for his/her helpful suggestions and comments.}
This paper is a part of master's thesis of the second author. He thanks the first author for his supervision and much helpful advice.
The authors are grateful to Ryo Ohashi for his contribution to the proof of
Proposition \ref{prop:SspImpliesNonsing},
and also to Takumi Ogasawara and Yuki Nakada for their important comments.
This work was supported by JSPS Grant-in-Aid for Scientific Research\,(C)\ 21K03159.

\section{Preliminaries}
In this section, we recall \textcolor{black}{the} Lauricella hypergeometric series
and partial differential equations they satisfy.
We also review the relation between the Cartier-Manin matrix of a hyperelliptic curve and the associated 
 Lauricella hypergeometric series. Finally we discuss
 the superspeciality and the nonsingularity of curves of genus two.

\subsection{Lauricella hypergeometric series}
\textcolor{black}{The} Lauricella hypergeometric series in three variables is defined as follows:
\begin{Def}\label{def:Appell}
Consider an element $\ALseries$ of $\mathbb{C}\fps$:
\begin{equation}
\ALseriesdef \zzz, \label{eq:Appell3}
\end{equation}
where $a,b_1,b_2,b_3,c \in \mathbb{C},-c \notin {\mathbb{Z}}_{\ge 0}$.
We call it \textcolor{black}{the} Lauricella hypergeometric series in three variables
with respect to $(a,b_1,b_2,b_3,c)$.
\end{Def}

\textcolor{black}{The} Lauricella hypergeometric series \textcolor{black}{satisfies} the following partial differential equations,
see $E_D(a,(b),c)$ in \cite[Table 3.2 in Section 3.5]{Matsumoto}.

\begin{Thm}\label{thm:main1}
\textcolor{black}{The} Lauricella hypergeometric series in three variables $w\coloneqq\ALseries$ satisfies
\begin{equation}\label{eq:main}
    \diff{\ell}w=0
\end{equation}
for $\ell = 1,2,3$ and
\begin{equation}\label{eq:main2}
    \diffsimp{\ell}{m}w=0
\end{equation}
for $1\le \ell < m\le 3$,
where the partial differential operators $\diff{1}$, $\diff{2}$, $\diff{3}$ are given by
\begin{align*}
\diff{1}
=&z_1(1-z_1)\pdd{z_1}+z_2(1-z_1)\genpdd{z_1}{z_2}
+z_3(1-z_1)\genpdd{z_1}{z_3}\\
&+\left(c-(a+b_1+1)z_1\right)\pd{z_1}-b_1 z_2\pd{z_2}-b_1 z_3\pd{z_3}-a b_1,
\end{align*}
\begin{align*}
\diff{2}
=&z_1(1-z_2)\genpdd{z_1}{z_2}+z_2(1-z_2)\pdd{z_2}
+z_3(1-z_2)\genpdd{z_2}{z_3}\\
&-b_2 z_1\pd{z_1}+\left(c-(a+b_2+1)z_2\right)\pd{z_2}-b_2 z_3\pd{z_3}-a b_2,
\end{align*}
\begin{align*}
\diff{3}
=&z_1(1-z_3)\genpdd{z_1}{z_3}+z_2(1-z_3)\genpdd{z_2}{z_3}
+z_3(1-z_3)\pdd{z_3}\\
&-b_3 z_1\pd{z_1}-b_3 z_2\pd{z_2}+\left(c-(a+b_3+1)z_3\right)\pd{z_3}-a b_3,
\end{align*}
and 
\begin{equation}
\diffsimp{\ell}{m}:=
(z_\ell-z_m)\dfrac{\partial^2}{\partial z_\ell\partial z_m}
+b_\ell\dfrac{\partial}{\partial z_m}
-b_m\dfrac{\partial}{\partial z_\ell}
\end{equation}
for $1\le \ell < m \le 3$.
\end{Thm}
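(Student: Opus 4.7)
The plan is a direct coefficient-by-coefficient verification: I will substitute the formal series into each operator and show that the coefficient of every monomial in the result vanishes. The central observation is that, setting $N = n_1 + n_2 + n_3$, the Pochhammer identity $\an[n+1]{x} = (x + n)\an{x}$ applied to the definition of $A_{n_1, n_2, n_3}$ yields the recursion
\[
(n_1 + 1)(c + N)\,A_{n_1+1, n_2, n_3} \;=\; (a + N)(b_1 + n_1)\,A_{n_1, n_2, n_3} \qquad (\star)
\]
together with the two analogues obtained by permuting the roles of the three indices. The proof then amounts to recognising each PDE, after reindexing, as one (or a combination) of these recursions.

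For $\diff{1} w = 0$ I would apply $\diff{1}$ termwise to $w$ and extract the coefficient of $\zzz$. Each summand contributes either to an $A_{n_1+1, n_2, n_3}$-term (from $z_1 \pdd{z_1}$, from $z_k \genpdd{z_1}{z_k}$ for $k = 2, 3$, and from $c\,\pd{z_1}$) or to an $A_{n_1, n_2, n_3}$-term (from everything else). A routine grouping gives $(n_1 + 1)(c + N)$ for the first coefficient and, after the algebraic simplification
\[
n_1(N + a + b_1) + b_1(N - n_1 + a) \;=\; (N + a)(n_1 + b_1),
\]
the value $-(a + N)(b_1 + n_1)$ for the second. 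The total coefficient of $\zzz$ is then exactly the left-hand-side minus the right-hand-side of $(\star)$, hence zero. The cases $\ell = 2, 3$ follow by the obvious symmetry of both operator and series under permutation of indices.

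For $\diffsimp{1}{2} w = 0$ the same kind of computation will reduce the coefficient of $\zzz$ to
\[
(n_2 + 1)(n_1 + b_1)\,A_{n_1, n_2+1, n_3} - (n_1 + 1)(n_2 + b_2)\,A_{n_1+1, n_2, n_3},
\]
and applying $(\star)$ together with its $n_2$-analogue rewrites both summands as a common scalar multiple of $A_{n_1, n_2, n_3}$, namely $\tfrac{(a + N)(b_1 + n_1)(b_2 + n_2)}{c + N}\,A_{n_1, n_2, n_3}$, so they cancel. The cases $(1, 3)$ and $(2, 3)$ are identical by symmetry.

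The main obstacle is purely one of bookkeeping. I will need to collect, in the coefficient of $\zzz$ inside $\diff{\ell} w$, the four summands contributing to the $A_{n_1+1, n_2, n_3}$-part and the seven contributing to the $A_{n_1, n_2, n_3}$-part, and then perform the factorisation into $(N + a)(n_1 + b_1)$. No analytic input is needed, since the hypergeometric series is being treated as a formal power series throughout; the argument is valid verbatim whenever $c + N$ never vanishes, which is ensured by the hypothesis $-c \notin \mathbb{Z}_{\ge 0}$.
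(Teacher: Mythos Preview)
Your proposal is correct and is essentially the same verification the paper relies on: the paper records precisely your coefficient identity in Remark~\ref{rem:StandardRelation} (the ``standard relation''), namely that the $\zzz$-coefficient of $\diff{1}f$ equals $(1+n_1)(c+N)B_{n_1+1,n_2,n_3}-(b_1+n_1)(a+N)B_{n_1,n_2,n_3}$, which together with the Pochhammer recursion $(\star)$ is exactly your argument. The paper itself does not spell out a proof of Theorem~\ref{thm:main1} but cites \cite[Table 3.2 in Section 3.5]{Matsumoto}; your direct termwise computation is the natural justification and matches the paper's treatment.
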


\begin{Rem}\label{rem:StandardRelation}
Let $K$ be any field.
The partial differential operators $\diff{\ell}$ for $\ell = 1,2,3$
can be considered if $a,b_1,b_2,b_3,c\in K$.
Let $\textcolor{black}{F}$ be any element of $K\fps$, say
\[
\textcolor{black}{F} = \sum_{\nnn\ge0} B_{\nnn}\zzz.
\]
A direct computation shows that the $\zzz$-coefficient of
$\diff{1} \textcolor{black}{F}$ is
\[
(1+n_1)(c+n_1+n_2+n_3) B_{\nnn[+1][][]}
-(b_1+n_1)(a+n_1+n_2+n_3) B_{\nnn}.
\]
If the characteristic of $K$ is $0$ and $\diff{1}\textcolor{black}{F}=0$,
then we have the recurrence relation
\begin{equation}\label{eq:StandardRelation}
B_{\nnn[+1][][]} = \frac{(b_1+n_1)(a+n_1+n_2+n_3)}{(1+n_1)(c+n_1+n_2+n_3) }B_{\nnn}.
\end{equation}
We call this the {\it standard \textcolor{black}{recurrence} relation} for $\textcolor{black}{F}$ and $\diff{1}$.
The similar things also hold for $\diff{2} \textcolor{black}{F}$ and $\diff{3} \textcolor{black}{F}$.
This shows that if the characteristic of $K$ is $0$, then the differential equations \eqref{eq:main}
determine $\textcolor{black}{F}$ from $B_{0,0,0}$ provided $B_{0,0,0}\ne 0$.
\end{Rem}

\if0
\begin{proof}{(\color{red}Please write only the essence of the proof that will be used in the proof in the positive-characteristic case.)}
First, we prove \eqref{eq:main}. Since the differential equations \eqref{eq:main} and $\diff{1},\diff{2},\diff{3}$
are symmetric with respect to variables $z_1$, $z_2$, $z_3$,
it suffices to show $\diff{1}w=0$.
For $\nnn \ge 0$, let $A_{\nnn}$ be the $\zzz$-coefficient of the right-hand side of 
the definition \eqref{eq:Appell3} of
$\ALseries$, explicitly
\begin{equation}
A_{\nnn} \coloneqq \subALseriesdef, \label{eq:proof1}
\end{equation}
i.e.,
\[
w = \sum_{\nnn\ge0} A_{\nnn} \zzz.
\]

By \eqref{eq:proof1}, we have
\begin{align}
A_{\nnn[+1][][]}
&=
\dfrac{ \an[1+n_1+n_2+n_3]{a} \an[1+n_1]{b_1} \an[n_2]{b_2} \an[n_3]{b_3} }
{ \an[1+n_1+n_2+n_3]{c} \an[1+n_1]{1} \an[n_2]{1} \an[n_3]{1} }\notag\\
&=
\dfrac{ (a+n_1+n_2+n_3) (b_1+n_1) }{ (c+n_1+n_2+n_3) (1+n_1) }A_{\nnn}.
\label{eq:recurrence}
\end{align}
In other words,
\begin{equation}
(c+n_1+n_2+n_3)(1+n_1)A_{\nnn[+1][][]}=(a+n_1+n_2+n_3)(b_1+n_1)A_{\nnn}
\label{eq:proof2}
\end{equation}
holds for $\nnn \ge 0$.
As
\[
\pd{z_1}w =
\sum_{\nnn\ge0} {n_1 A_{\nnn} \zzz[-1][][] }
\]
we have
\[
z_1\pd{z_1}w =
\sum_{\nnn\ge0} {n_1 A_{\nnn} \zzz }.
\]
By the similar formulae for
${\displaystyle z_2\pd{z_2}w}$ and ${\displaystyle z_3\pd{z_3}w}$, we get
\begin{align}
&\left(a+z_1\pd{z_1}+z_2\pd{z_2}+z_3\pd{z_3}\right)w\notag\\
=&\sum_{\nnn\ge0}
{(a+n_1+n_2+n_3) A_{\nnn} \zzz}.\label{eq:proof3}
\end{align}
Moreover, from this, we get
\begin{align}
&\left(a+z_1\pd{z_1}+z_2\pd{z_2}+z_3\pd{z_3}\right)
\left(b_1+z_1\pd{z_1}\right)w\notag\\
=&\sum_{\nnn\ge0}
{(a+n_1+n_2+n_3)(b_1+n_1) A_{\nnn} \zzz}.
\label{eq:proof4}
\end{align}

Write \eqref{eq:Appell3} as
\[
w=\sum_{\substack{n_1=-1,\\n_2,n_3\ge0}}
{A_{n_1+1,n_2,n_3}z_1^{n_1+1}z_2^{n_2}z_3^{n_3}}
\]
and in the similar way as when we obtain \eqref{eq:proof3} we get
\begin{align}
&\left(c-1+z_1\pd{z_1}+z_2\pd{z_2}+z_3\pd{z_3}\right)w\notag\\
=&\sum_{\substack{n_1=-1,\\n_2,n_3\ge0}}
{ (c-1+n_1+1+n_2+n_3) A_{n_1+1,n_2,n_3} z_1^{n_1+1}z_2^{n_2}z_3^{n_3} }
\notag\\
=&\sum_{\substack{n_1=-1,\\n_2,n_3\ge0}}
{ (c+n_1+n_2+n_3) A_{n_1+1,n_2,n_3} z_1^{n_1+1}z_2^{n_2}z_3^{n_3} }.\notag
\end{align}
By this, we get
\begin{align}
&\left(c-1+z_1\pd{z_1}+z_2\pd{z_2}+z_3\pd{z_3}\right)z_1\pd{z_1}w
\notag\\
=&\sum_{\substack{n_1=-1,\\n_2,n_3\ge0}}
{(c+n_1+n_2+n_3)(1+n_1) A_{n_1+1,n_2,n_3} z_1^{n_1+1}z_2^{n_2}z_3^{n_3}}.\label{eq:proof5}
\end{align}

In the calculation below, we use the exchange formula
\[
\pd{z_j} \left( z_j\pd{z_j} \right) = \pd{z_j} + z_j\pdd{z_j} \quad (j = 1,2,3),
\]
which is derived from Proposition \ref{prop:partder}. 

First, we multiply $\zzz$ to
the right-hand side of \eqref{eq:proof2} and consider their sum.
Then the right-hand side of \eqref{eq:proof4} appears, and we obtain
\begin{align}
&\sum_{\nnn\ge0}
{(a+n_1+n_2+n_3)(b_1+n_1) A_{\nnn} \zzz}
\notag\\
=&\left(\left(a+z_1\pd{z_1}+z_2\pd{z_2}+z_3\pd{z_3}\right)\left(b_1+z_1\pd{z_1} \right)\right)w\notag\\
=&\biggl(ab_1+b_1z_1\pd{z_1}+b_1z_2\pd{z_2}+b_1z_3\pd{z_3}+az_1\pd{z_1}
\notag\\
&\qquad+z_1\left(\pd{z_1}+z_1\pdd{z_1}\right)
+z_1z_2\genpdd{z_1}{z_2}+z_1z_3\genpdd{z_1}{z_3}\biggr)w\notag\\
=&\biggl(ab_1+\left(a+b_1+1\right)z_1\pd{z_1}+b_1z_2\pd{z_2}+b_1z_3\pd{z_3}
\notag\\
&\qquad\qquad\quad+z_1^2\pdd{z_1}+z_1z_2\genpdd{z_1}{z_2}
+z_1z_3\genpdd{z_1}{z_3}\biggr)w.\label{eq:proof6}
\end{align}

Next, we multiply $z_1^{n_1\bm{+1}}z_2^{n_2}z_3^{n_3}$
to the left-hand side of  \eqref{eq:proof2}
and consider their sum. Then
the right-hand side of \eqref{eq:proof5} appears, and we obtain
\begin{align}
&\sum_{\nnn\ge0}
{(c+n_1+n_2+n_3)(1+n_1) A_{n_1+1,n_2,n_3} z_1^{n_1+1}z_2^{n_2}z_3^{n_3}}
\notag\\
=&\sum_{\substack{n_1=-1,\\n_2,n_3\ge0}}
{(c+n_1+n_2+n_3)(1+n_1) A_{n_1+1,n_2,n_3} z_1^{n_1+1}z_2^{n_2}z_3^{n_3}}
\notag\\
=&\left(\left(c-1+z_1\pd{z_1}+z_2\pd{z_2}+z_3\pd{z_3}\right)
z_1\pd{z_1}\right)w\notag\\
=&\left((c-1)z_1\pd{z_1}+z_1\left(\pd{z_1}+z_1\pdd{z_1}\right)
+z_1z_2\genpdd{z_1}{z_2}+z_1z_3\genpdd{z_1}{z_3}\right)w\notag\\
=&\left(z_1\left(c\pd{z_1}+z_1\pdd{z_1}+z_2\genpdd{z_1}{z_2}
+z_3\genpdd{z_1}{z_3}\right)\right)w.\label{eq:proof7}
\end{align}

Hence, as that obtained by multiplying $\bm{z_1}$ to the right-hand side of \eqref{eq:proof6} is equal to the right-hand side of \eqref{eq:proof7},
we arrange it with the factor of $z_1$ as follows:
\begin{equation}
\begin{split}
\biggl( z_1
\Bigl(
z_1(1-&z_1)\pdd{z_1}+z_2(1-z_1)\genpdd{z_1}{z_2}+z_3(1-z_1)\genpdd{z_1}{z_3}\\
&+(c-(a+b_1+1)z_1)\pd{z_1}-b_1 z_2\pd{z_2}-b_1 z_3\pd{z_3}-a b_1
\Bigr)
\biggr)w=0.\label{eq:proof8}
\end{split}
\end{equation}
As the ring of formal power series $\mathbb{C}\fps$ is a domain,
by removing the factor $z_1$  of \eqref{eq:proof8}, we obtain $\diff{1}w=0$, completing the proof of \eqref{eq:main}.

Second, we prove \eqref{eq:aux}, and again it suffices to show only the case of $\diffsimp{1}{2}w=0$, and we obtain it from the following: 
\begin{align*}
\diffsimp{1}{2}w
&=
(z_1-z_2)\genpdd{z_1}{z_2}+b_1\pd{z_2}-b_2\pd{z_1}\\
&=
\sum_{\nnn\ge0}
(b_1+n_1)n_2A_{\nnn}\zzz[][-1][]-(b_2+n_2)n_1A_{\nnn}\zzz[-1][][]\\
&=
\sum_{\nnn\ge0}
(b_1+n_1)(1+n_2)A_{\nnn[][+1][]}\zzz-(b_2+n_2)(1+n_1)A_{\nnn[+1][][]}\zzz\\
&=
\sum_{\nnn\ge0}
\left(
(b_1+n_1)(1+n_2)A_{\nnn[][+1][]}-(b_2+n_2)(1+n_1)A_{\nnn[+1][][]}
\right) \zzz\\
&=
\sum_{\nnn\ge0}
\left(
(b_1+n_1)(1+n_2)
\dfrac{ (a+n_1+n_2+n_3) (b_2+n_2) }{ (c+n_1+n_2+n_3) (1+n_2) }
A_{\nnn}\\
&\qquad\qquad\qquad
-(b_2+n_2)(1+n_1)
\dfrac{ (a+n_1+n_2+n_3) (b_1+n_1) }{ (c+n_1+n_2+n_3) (1+n_1) }
A_{\nnn}
\right) \zzz\\
&=
\sum_{\nnn\ge0}
\left(
(b_1+n_1)(b_2+n_2)-(b_2+n_2)(b_1+n_1)
\right)
\dfrac{ (a+n_1+n_2+n_3) }{ (c+n_1+n_2+n_3) }
A_{\nnn} \zzz\\
&=
0.
\end{align*}
\end{proof}

\subsection{Partial differential equations
derived from $\diff{i}$ and $\diffsimp{i}{j}$ {\color{red}(Omit??)}}
Let $w$ satisfies $\diff{i}w=0$ for $i=1,2,3$ and $\diffsimp{i}{j}w=0$ for $1\leq i<j \leq 3$.
Then $w$ is a solution for the partial differential equations obtained by
substituting
$
\dfrac{b_j}{z_i-z_j} \partial_i - \dfrac{b_i}{z_i-z_j} \partial_j$
for $\partial_{ij}$ in $\diff{k}$ for $i,j,k=1,2,3$:
\[
z_k(1-z_k) \partial^2_k
+ P_k \partial_k 
- \dfrac{b_k}{z_i-z_k}(z_i^2-z_i)\partial_i 
- \dfrac{b_k}{z_j-z_k}(z_j^2-z_j)\partial_j
- ab_k
\]
for $\{i,j,k\}=\{1,2,3\}$ with $i<j$
and for a certain rational function $P_k$ in $z_1,z_2,z_3$.
Applying $\partial_k$ from the left succesively, we have the following.
\begin{Lem}\label{lem:HigherDerivative} We have $\mathop{\mathcal D} w = 0$ for
\[
{\mathcal D} := z_k(1-z_k)\partial_k^m + \sum_{\ell=1}^{m-1} P_{k,\ell}\partial_k^\ell
- \dfrac{b_k^{m-1}}{(z_i-z_k)^{m-1}}(z_i^2-z_i)\partial_i - \dfrac{b_k^{m-1}}{(z_j-z_k)^{m-1}}(z_j^2-z_j)\partial_j 
\]
for $\{i,j,k\}=\{1,2,3\}$ with $i<j$
and for a certain rational function $P_{k,\ell}$ in $z_1,z_2,z_3$.
\end{Lem}
\fi
\subsection{Cartier-Manin matrices
and hypergeometric series}\label{subsec:CM-HG}
Let $p$ be an odd prime and $K$ a field of characteristic $p$.
Let $g$ be a natural number.
A hyperelliptic curve of genus $g$ is defined by $y^2 = f(x)$
with separable polynomial $f(x) \in K[x]$ of degree $2g+1$ or $2g+2$.
Let us review its Cartier-Manin matrix.

\begin{Def}\label{def:CMMatrix}
For the hyperelliptic curve $C$ defined by $y^2 = f(x)$ with
\[
f(x)=x(x-1)(x-z_1)\cdots(x-z_{2g-1}),
\]
let
\[
f(x)^{\frac{p-1}{2}}=\sum_{k=0}^{\frac{p-1}{2}(2g+1)} {c_k x^k}.
\]
be the expansion of $f(x)^{\frac{p-1}{2}}$.
The Cartier-Manin matrix of $C$ is defined to be
\[
M = { \Bigl( c_{ip-j} \Bigr) }_{i,j=1,\ldots,g}
\]
(cf.\ \cite[Section 2]{Yui}).
\end{Def}

\begin{Rem}
In this paper, we consider the case of $g=2$.
\end{Rem}

We review truncations of \textcolor{black}{the} Lauricella hypergeometric series
and a description of $c_{ip-j}$ with respect to them, due to  Ohashi and Harashita \cite{OH}.
\begin{Def}[Ohashi-Harashita~{\cite[Definition 6.4 and Example 6.10]{OH}}]\label{def:Truncation}
For $1 \le i,j \le g=2$, we put
\begin{equation*}
\begin{cases}
{\displaystyle a' \coloneqq \dfrac{2g+1}{2} - j 
= g+\dfrac{1}{2}-j = \dfrac{5}{2}-j },\\
{\displaystyle b' \coloneqq \dfrac{1}{2} },\\
{\displaystyle c' \coloneqq a'+1-\dfrac{1}{2} = g+1-j = 3-j },\\
{\displaystyle d' \coloneqq \dfrac{p-1}{2}(2g+1) - ip + j
= \dfrac{p-1}{2} \cdot 5 - ip + j.}
\end{cases}
\end{equation*}
For \textcolor{black}{the} Lauricella hypergeometric series,
\[
\mathcal{F}(a',b',b',b',c'\,;z_1,z_2,z_3)
= \sum_{n_1=0}^\infty \, \sum_{n_2=0}^\infty \, \sum_{n_3=0}^\infty
A^{(i,j)}_{n_1,n_2,n_3}
z_1^{n_1} z_2^{n_2} z_3^{n_3}
\]
with
\begin{equation}\label{eq:Aij}
A^{(i,j)}_{n_1,n_2,n_3}
:=\dfrac{ \an[\sum_{k=1}^3 n_k]{a'} \prod_{k=1}^3 \an[n_k]{b'} }
{ \an[\sum_{k=1}^3 n_k]{c'} \prod_{k=1}^3 \an[n_k]{1} },
\end{equation}
we define its truncation
$\truncatedAL$
to be  the sum of $z_1^{n_1} z_2^{n_2} z_3^{n_3}$-term 
for triples $(\nnn)$ satisfying
\begin{equation*}
\begin{cases}
{\displaystyle n_k \le \dfrac{p-1}{2} \quad (k=1,2,3)},\\
{\displaystyle d'-\dfrac{p-1}{2} \le n_1+n_2+n_3 \le d'. }
\end{cases}
\end{equation*}
We denote the set of exponents of monomials in $\truncatedAL$ by $\supp$, namely
\begin{equation}\label{eq:supp}
\supp
=
\setsymbolin{(n_1,n_2,n_3)}{\NNI^3}{\begin{array}{l}
    n_k \le \dfrac{p-1}{2} \quad (k=1,2,3),\\
    d'-\dfrac{p-1}{2} \le n_1+n_2+n_3 \le d'
\end{array}}.
\end{equation}
A priori, $\truncatedAL$ is an element of $\mathbb{Q}[z_1,z_2,z_3]$ but
we regard $\truncatedAL$
as an element of ${\mathbb{F}_p}[z_1,z_2,z_3]$,
since the denominator of the coefficient of each term of $\truncatedAL$
is coprime to $p$.
\end{Def}

The following relation between
the Cartier-Manin matrix and truncations $\truncatedAL$ of Lauricella hypergeometric series is known.
This is a special case of the result \cite[Theorem 6.5]{OH} by Ohashi and Harashita.
\begin{Fact}
Assume $p>2$.
Over $\mathbb{F}_p$, we have
\[
c_{ip-j} = \dfrac{ \an[d']{c'} }{ \an[d']{a'} } \truncatedAL
\]
for any $(i,j)$ with $1 \le i,j \le 2$.
\end{Fact}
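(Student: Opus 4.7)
The plan is to reduce the theorem to showing that the truncated Lauricella polynomial $\truncatedAL$ satisfies the same partial differential equations modulo $p$. By the Fact at the end of Subsection \ref{subsec:CM-HG}, $c_{ip-j}$ is an $\mathbb{F}_p$-scalar multiple of $\truncatedAL$, so it suffices to verify $\diffchar{\ell}{j} \truncatedAL = 0$ and $\diffsimp{\ell}{m} \truncatedAL = 0$ in $\mathbb{F}_p[z_1,z_2,z_3]$. The approach is to compute the coefficient of $\zzz$ in each image and show it vanishes in $\mathbb{F}_p$, by splitting into an ``interior'' case (where the characteristic-zero recurrence \eqref{eq:StandardRelation} applies verbatim to the coefficients \eqref{eq:Aij}) and a short list of ``boundary'' cases (where a scalar prefactor vanishes mod $p$ thanks to the specific parameter choice $a' = 5/2-j$, $b' = 1/2$, $c' = 3-j$, $d' = 5(p-1)/2-ip+j$). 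By symmetry it suffices to treat $\diffchar{1}{j}$ and $\diffsimp{1}{2}$.

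The computation in Remark \ref{rem:StandardRelation} is characteristic-independent, so the $\zzz$-coefficient of $\diffchar{1}{j} \truncatedAL$ equals
\[
(1+n_1)(c'+n_1+n_2+n_3) A^{(i,j)}_{\nnn[+1][][]} - (b'+n_1)(a'+n_1+n_2+n_3) A^{(i,j)}_{\nnn},
\]
where $A^{(i,j)}$ is extended by zero outside $\supp^{(i,j)}$. When both $(\nnn)$ and $(\nnn[+1][][])$ lie in $\supp^{(i,j)}$, a direct substitution of \eqref{eq:Aij} gives $A^{(i,j)}_{\nnn[+1][][]} / A^{(i,j)}_{\nnn} = (b'+n_1)(a'+n_1+n_2+n_3) / [(1+n_1)(c'+n_1+n_2+n_3)]$, and since $1+n_1$ and $c'+n_1+n_2+n_3$ are $p$-units on $\supp^{(i,j)}$, the two terms cancel modulo $p$. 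Exactly three boundary configurations remain: if $n_1 = (p-1)/2$, then $b' + n_1 = p/2 \equiv 0$; if $n_1+n_2+n_3 = d'$, then $a' + d' = (5-2i)p/2 \equiv 0$; and if $n_1+n_2+n_3 = d' - (p-1)/2 - 1$ (the only possibility with $(\nnn) \notin \supp^{(i,j)}$ but $(\nnn[+1][][]) \in \supp^{(i,j)}$), then $c' + n_1+n_2+n_3 = (2-i)p \equiv 0$. In each case one of the two scalar prefactors vanishes modulo $p$, so the coefficient is zero.

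The operator $\diffsimp{1}{2}$ is handled similarly: its $\zzz$-coefficient on $\truncatedAL$ equals $(b'+n_1)(n_2+1) A^{(i,j)}_{\nnn[][+1][]} - (b'+n_2)(n_1+1) A^{(i,j)}_{\nnn[+1][][]}$, which vanishes identically via \eqref{eq:Aij} when both shifted triples lie in $\supp^{(i,j)}$, and is otherwise killed by $b' + (p-1)/2 = p/2 \equiv 0$, since the only way for the two shifted triples to have different membership in $\supp^{(i,j)}$ is that one of $n_1, n_2$ equals $(p-1)/2$. The main obstacle---the real content of the proof---is the boundary bookkeeping for $\diffchar{\ell}{j}$: it rests on the three arithmetic coincidences $b' + (p-1)/2 \equiv 0$, $a' + d' \equiv 0$, and $c' + (d' - (p-1)/2 - 1) \equiv 0 \pmod p$, which reflect the particular Rosenhain parameters arising from a genus-$2$ hyperelliptic curve and tie the support bounds of $\supp^{(i,j)}$ to the numerator/denominator factors of the Lauricella recurrence.
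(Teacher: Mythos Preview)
Your proposal does not address the stated Fact at all. The Fact asserts the identity
\[
c_{ip-j} = \dfrac{\an[d']{c'}}{\an[d']{a'}}\,\truncatedAL
\]
relating the Cartier--Manin entries to the truncated Lauricella polynomials; in the paper this is not proved but cited as a special case of \cite[Theorem 6.5]{OH}. What you have written is instead (a correct sketch of) the proof of Theorem~\ref{thm:PDEinPositiveChar}, namely that $\diffchar{\ell}{j}\truncatedAL=0$ and $\diffsimp{\ell}{m}\truncatedAL=0$ in $\mathbb{F}_p[z_1,z_2,z_3]$. Worse, your very first sentence \emph{invokes} the Fact (``By the Fact at the end of Subsection~\ref{subsec:CM-HG}, $c_{ip-j}$ is an $\mathbb{F}_p$-scalar multiple of $\truncatedAL$'') as an ingredient, so as a purported proof of the Fact the argument is circular.

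To actually prove the Fact one must compute the coefficient of $x^{ip-j}$ in $\bigl(x(x-1)(x-z_1)(x-z_2)(x-z_3)\bigr)^{(p-1)/2}$ directly, expand each factor by the binomial theorem, and match the resulting multinomial expression against the Lauricella coefficients $A^{(i,j)}_{n_1,n_2,n_3}$; this is the content of \cite[Theorem 6.5]{OH} and has nothing to do with the differential operators $\diffchar{\ell}{j}$ or $\diffsimp{\ell}{m}$. Your boundary-case analysis and the three ``arithmetic coincidences'' are exactly the paper's argument for Theorem~\ref{thm:PDEinPositiveChar} in Section~4, so the material is not wrong, merely attached to the wrong statement.
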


By \cite[Example 6.10]{OH}, we know that
\begin{equation}\label{eq:Const_OH}
\dfrac{ \an[d']{c'} }{ \an[d']{a'} }  = \dfrac{(p(4-2i+1)-1)!!}{(p(4-2i+1)-2)!!}
\dfrac{(4-2j-1)!!}{(4-2j)!!},
\end{equation}
but the value has a simpler description:
\begin{Lem}\label{lem:simplification}
We have
\[    \dfrac{ \an[d']{c'} }{ \an[d']{a'} }  \equiv 
(-1)^\frac{p-1}{2}\cdot\dfrac{j}{i}\pmod{p}. \]
\end{Lem}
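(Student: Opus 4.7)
The plan is to start from formula \eqref{eq:Const_OH} and reduce it modulo $p$ via two Wilson-type computations, one for each value of $i\in\{1,2\}$. Setting $q := 5-2i$, so that $q = 3$ when $i = 1$ and $q = 1$ when $i = 2$, the identities $(pq-1)!! = 2^{(pq-1)/2}\bigl((pq-1)/2\bigr)!$ and $(pq-1)! = (pq-1)!!\cdot(pq-2)!!$, together with the evaluation $(3-2j)!!/(4-2j)!! = 2^{j-2}$ for $j\in\{1,2\}$, rewrite \eqref{eq:Const_OH} as
\[
\dfrac{\an[d']{c'}}{\an[d']{a'}} \;=\; 2^{pq-3+j}\cdot\dfrac{\bigl(((pq-1)/2)!\bigr)^{2}}{(pq-1)!}.
\]
Fermat's little theorem reduces the prefactor to $2^{q-3+j}\pmod p$, so the task becomes computing the factorial ratio modulo $p$ in each case.

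For $i=2$ (so $pq=p$) all factorials are coprime to $p$, and the two classical congruences $(p-1)!\equiv -1\pmod p$ (Wilson) and $\bigl(((p-1)/2)!\bigr)^{2}\equiv (-1)^{(p+1)/2}\pmod p$ (obtained by pairing $k$ with $p-k$ in the second half of $(p-1)!$) immediately give $(-1)^{(p-1)/2}\cdot 2^{j-2}$. Since $2^{j-2}=j/2$ for $j\in\{1,2\}$, this equals $(-1)^{(p-1)/2}\cdot j/i$, as claimed.

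For $i=1$ (so $pq=3p$) both $\bigl(((3p-1)/2)!\bigr)^{2}$ and $(3p-1)!$ have $p$-adic valuation $2$, so the $p$-powers cancel but must be extracted explicitly. I would partition the factors of each factorial by residue class modulo $p$: after removing the unique factors divisible by $p$ (namely $p$ in $((3p-1)/2)!$, and $p$ and $2p$ in $(3p-1)!$), each nonzero residue $r\in\{1,\dots,p-1\}$ contributes three copies to $(3p-1)!/p^{2}$, while contributing two copies to $((3p-1)/2)!/p$ if $r\le (p-1)/2$ and one copy otherwise. Wilson's theorem applied to the grouped products yields $(3p-1)!/p^{2}\equiv -2\pmod p$ and $((3p-1)/2)!/p\equiv -((p-1)/2)!\pmod p$; squaring the latter and dividing gives $2^{j-1}(-1)^{(p-1)/2}$, which again equals $(-1)^{(p-1)/2}\cdot j/i$ since $2^{j-1}=j$ for $j\in\{1,2\}$ and $i=1$.

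The main obstacle is the residue-class bookkeeping in the case $i=1$: one must isolate the $p$-factors in both numerator and denominator consistently and regroup the surviving terms so that Wilson's theorem applies cleanly. Once this is done, the two cases combine into the uniform congruence $2^{j-i}(-1)^{(p-1)/2}\equiv (j/i)(-1)^{(p-1)/2}\pmod p$.
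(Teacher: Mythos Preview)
Your argument is correct. The reduction to $2^{pq-3+j}\,\bigl(((pq-1)/2)!\bigr)^{2}\!/\!(pq-1)!$ is valid (note $pq$ is odd, so the identity $(2m)!!=2^{m}m!$ applies), the Fermat reduction of the power of $2$ goes through since $pq-3+j\equiv q-3+j\pmod{p-1}$, and your residue-class bookkeeping for $q=3$ checks out: $(3p-1)!/p^{2}\equiv((p-1)!)^{3}\cdot 1\cdot 2\equiv -2$ and $((3p-1)/2)!/p\equiv ((p-1)/2)!\cdot(p-1)!\equiv -((p-1)/2)!$.

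The paper takes a different route: it keeps the double factorials throughout rather than converting to ordinary factorials. For $i=2$ it proves $(p-1)!!\equiv(-1)^{(p-1)/2}(p-2)!!$ directly by the substitution $2k\mapsto -(p-2k)$ on each even factor; for $i=1$ it splits $(3p-1)!!$ and $(3p-2)!!$ into three blocks of length $p$, reduces each block modulo $p$ to a copy of $(p-1)!!$ or $(p-2)!!$ (times an explicit factor of $p$ or $2p$), and then invokes the $i=2$ case. Your approach trades this hands-on manipulation for the standard Wilson congruences $(p-1)!\equiv -1$ and $\bigl(((p-1)/2)!\bigr)^{2}\equiv(-1)^{(p+1)/2}$, which makes the argument more uniform in $i$ and arguably cleaner, at the cost of importing those two facts. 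The paper's version is more self-contained but requires separate block decompositions for the odd and even double factorials.
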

\begin{proof}
By \eqref{eq:Const_OH} with $0!!=(-1)!!=1$, we have
\[
    \frac{ \an[d']{c'} }{ \an[d']{a'} }
    = \frac{\left((5-2i)p-1\right)!!}{\left((5-2i)p-2\right)!!} \cdot \frac{j}{2}
\]
for $j=1,2$. 
It suffices to show
\[
   \frac{\left((5-2i)p-1\right)!!}{\left((5-2i)p-2\right)!!}
   \equiv (-1)^{\frac{p-1}{2}} \cdot \frac{2}{i} \pmod{p},
\]
namely
\begin{alignat}{2}
    \frac{(3p-1)!!}{(3p-2)!!}
    &\equiv 2 (-1)^{\frac{p-1}{2}}  &\pmod{p} \quad\text{if}\; i &= 1, \label{eq:DF3p-1}\\ 
    \frac{(p-1)!!}{(p-2)!!}
    &\equiv (-1)^{\frac{p-1}{2}}  &\pmod{p} \quad\text{if}\; i &= 2. \label{eq:DFp-1}
\end{alignat}
Since $p$ is odd,
\eqref{eq:DFp-1} follows from
\[
    (p-1)!! = (p-1)(p-3) \dotsm 2
        \equiv (-1)(-3) \dotsm (-(p-2))
        = (-1)^{\frac{p-1}{2}} (p-2)!!.
\]
Similarly \eqref{eq:DF3p-1} follows from that $(3p-1)!!$ is equal to
\begin{align*}
& (3p-1)(3p-3)\dotsm (2p+2) \cdot 2p\cdot (2p-2)(2p-4) \dotsm (p+1) \cdot (p-1)!!\\
&\equiv (p-1)!! \cdot 2p \cdot (p-2)!!\cdot (p-1)!! =(p-1)! \cdot (p-1)!! \cdot 2p
\end{align*}
and that $(3p-2)!!$ is equal to
\begin{align*}
& (3p-2)(3p-4)\dotsm (2p+1) \cdot(2p-1)(2p-3) \dotsm (p+2) \cdot p\cdot (p-2)!!\\
&\equiv (p-2)!! \cdot (p-1)!!\cdot p \cdot (p-2)!! = (p-1)! \cdot (p-2)!! \cdot p,
\end{align*}
together with \eqref{eq:DFp-1}.
\end{proof}
%
%
%

We collect recurrence relations of ${A}_{\nnn}^{(i,j)}$, which will be used later on.
\begin{Lem}\label{lem:recurrence_relations}
\begin{enumerate}
    \item[(1)] (Standard \textcolor{black}{recurrence} relation) We have
    \[{A}_{\nnn[+1][][]}^{(i,j)}=
\frac{(\frac{5}{2}-j+n_1+n_2+n_3)(\frac{1}{2}+n_1)}{(3-j+n_1+n_2+n_3)(1+n_1)}{A}_{\nnn}^{(i,j)}
\]
for $n_1,n_2,n_3\ge0$,
and similarly for $n_2, n_3$.
\item[(2)] For $n_1,n_2,n_3 \ge 0$, we have
\begin{equation}\label{eq:ALinterchanging}
{A}_{\nnn}^{(i,1)} = \frac{1+2(n_1+n_2+n_3)}{1+n_1+n_2+n_3} {A}_{\nnn}^{(i,2)}
\end{equation}
and
\begin{equation}\label{eq:ALinterchanging2}
{A}_{\nnn[+1][][]}^{(i,2)}=\frac{\frac{1}{2}+n_1}{2(1+n_1)}{A}_{\nnn}^{(i,1)}. 
\end{equation}
\end{enumerate}
\begin{proof}
(1) is a special case of the standard \textcolor{black}{recurrence} relation in Remark \ref{rem:StandardRelation}.

(2) As for \eqref{eq:ALinterchanging}, we see
\begin{align*}
{A}_{\nnn}^{(i,1)}
=&\frac{\an[\sum_{k=1}^3 n_k]{\frac{5}{2}-1} \prod_{k=1}^3 \an[n_k]{\frac{1}{2}}}
{\an[\sum_{k=1}^3 n_k]{3-1} \prod_{k=1}^3 \an[n_k]{1}},
\end{align*}
combined with
\[
\frac{\an[n_1+n_2+n_3]{\frac{3}{2}}}
{\an[n_1+n_2+n_3]{2}}
=\frac{\an[n_1+n_2+n_3]{\frac{1}{2}}\left(\frac{1}{2}\right)^{-1}\left(\frac{1}{2}+n_1+n_2+n_3\right)}{\an[n_1+n_2+n_3]{1}(1+n_1+n_2+n_3)},
\]
leads to \eqref{eq:ALinterchanging}.

\eqref{eq:ALinterchanging2} follows from
\begin{align}
{A}_{\nnn[+1][][]}^{(i,2)}
&=\frac{(\frac{1}{2}+n_1+n_2+n_3)(\frac{1}{2}+n_1)}{(1+n_1+n_2+n_3)(1+n_1)}{A}_{\nnn}^{(i,2)}\label{eq:ALrecrel}\\
&=\frac{(\frac{1}{2}+n_1+n_2+n_3)(\frac{1}{2}+n_1)}{(1+n_1+n_2+n_3)(1+n_1)}\cdot\frac{1+n_1+n_2+n_3}{1+2(n_1+n_2+n_3)}{A}_{\nnn}^{(i,1)}.\notag
\end{align}
\end{proof}
\end{Lem}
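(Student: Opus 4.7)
The plan is to verify both parts by straightforward manipulation of Pochhammer symbols, using the explicit formula \eqref{eq:Aij} together with the basic identity $(x)_{n+1}=(x+n)(x)_n$.

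Part (1) is an immediate specialization. The coefficients $A^{(i,j)}_{n_1,n_2,n_3}$ in \eqref{eq:Aij} are precisely the Lauricella coefficients with parameters $(a,b_1,b_2,b_3,c)=(5/2-j,\,1/2,\,1/2,\,1/2,\,3-j)$, so the standard relation of Remark \ref{rem:StandardRelation} gives the claimed ratio $A^{(i,j)}_{n_1+1,n_2,n_3}/A^{(i,j)}_{n_1,n_2,n_3}$ verbatim; the corresponding identities for $n_2$ and $n_3$ follow from the evident symmetry of \eqref{eq:Aij} in the three subscripts.

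For part (2), the key observation is that passing from $j=2$ to $j=1$ changes only $a'$ (from $1/2$ to $3/2$) and $c'$ (from $1$ to $2$), leaving every other factor in \eqref{eq:Aij} untouched. Setting $N=n_1+n_2+n_3$, the ratio $A^{(i,1)}_{n_1,n_2,n_3}/A^{(i,2)}_{n_1,n_2,n_3}$ therefore reduces to $\bigl((3/2)_N/(1/2)_N\bigr)\bigl((1)_N/(2)_N\bigr)$, and the one-step telescoping identities $(3/2)_N/(1/2)_N=1+2N$ and $(2)_N/(1)_N=1+N$ immediately yield \eqref{eq:ALinterchanging}. For \eqref{eq:ALinterchanging2} I would chain the $j=2$ case of part (1) with \eqref{eq:ALinterchanging}: use the former to express $A^{(i,2)}_{n_1+1,n_2,n_3}$ in terms of $A^{(i,2)}_{n_1,n_2,n_3}$, then convert to $A^{(i,1)}_{n_1,n_2,n_3}$ via \eqref{eq:ALinterchanging}, and simplify using $1+2N=2(1/2+N)$.

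There is no genuine obstacle here, as both assertions are routine bookkeeping with Pochhammer symbols. The only point that deserves attention is keeping the factors of $2$ straight in the final step, since they can arise either from the ratio $(3/2)/(1/2)$ or from the identity $1+2N=2(1/2+N)$ used to cancel between \eqref{eq:ALinterchanging} and the standard relation; mixing these up would produce a spurious factor of $2$ in the answer.
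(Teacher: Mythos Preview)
Your proposal is correct and follows essentially the same route as the paper: part (1) is reduced to the standard relation of Remark~\ref{rem:StandardRelation}, and for part (2) both you and the paper compute the ratio $A^{(i,1)}/A^{(i,2)}$ via the Pochhammer-shift identities for $(3/2)_N/(1/2)_N$ and $(2)_N/(1)_N$, then derive \eqref{eq:ALinterchanging2} by chaining the $j=2$ instance of part (1) with \eqref{eq:ALinterchanging} and cancelling $1+2N=2(\tfrac12+N)$.
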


\subsection{Superspeciality implies nonsingularity}
Let $K$ be a field of characteristic $p>2$.
Let $C$ be the curve
\[
y^2 = f(x):=x(x-1)(x-\lambda_1)(x-\lambda_2)(x-\lambda_3)
\]
for $\lambda_1,\lambda_2,\lambda_3 \in K$.
Let $c_m$ be the $x^m$-coefficient of $f(x)^e$ with $e:=(p-1)/2$.
\begin{Prop}\label{prop:SspImpliesNonsing}
If $C$ is superspecial, more precisely its Cartier-Manin matrix
\[
\begin{pmatrix}
c_{p-1} & c_{p-2}\\
c_{2p-1} & c_{2p-2}
\end{pmatrix}
\]
is zero, then
we have $\lambda_i \ne 0, 1$ and $\lambda_i \ne \lambda_j$ if $i\ne j$
for $i,j\in\{1,2,3\}$.
\end{Prop}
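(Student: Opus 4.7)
We argue by contrapositive: assume the set $\{0,1,\lambda_1,\lambda_2,\lambda_3\}$ has a repetition, and we exhibit a nonzero entry of the Cartier--Manin matrix. The polynomial $f$ is symmetric in $\lambda_1,\lambda_2,\lambda_3$, and the substitution $x\mapsto 1-x$ induces an isomorphism of $C$ with the curve of parameters $1-\lambda_i$, under which vanishing of the Cartier--Manin matrix (an isomorphism invariant) is preserved. Hence we reduce to (A) $\lambda_1=0$, or (B) $\lambda_1=\lambda_2=\mu\notin\{0,1\}$.

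\textbf{Case (A).} Factor $f(x)=x^2h(x)$ with $h(x)=(x-1)(x-\lambda_2)(x-\lambda_3)$; then $f^{(p-1)/2}=x^{p-1}h^{(p-1)/2}$ gives $c_{p-1}=h(0)^{(p-1)/2}=(-\lambda_2\lambda_3)^{(p-1)/2}$, nonzero unless $\lambda_2$ or $\lambda_3$ vanishes. By permutation, reduce to $\lambda_1=\lambda_2=0$, so $f(x)=x^3(x-1)(x-\lambda_3)$. Setting $e:=(p-1)/2$ and $C_k(t):=[x^k]\bigl((x-1)(x-t)\bigr)^e$, a direct binomial expansion identifies $c_{2p-2}=C_e(\lambda_3)=(-1)^eH_p(\lambda_3)$ and $c_{2p-1}=C_{e+1}(\lambda_3)$. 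If $H_p(\lambda_3)\ne 0$, done; otherwise Igusa's separability yields $\lambda_3\ne 0,1$. From the identity $(x-1)(x-t)\partial_xP=e(2x-1-t)P$ with $P:=\bigl((x-1)(x-t)\bigr)^e$, one extracts the recursion
\[
t(k+1)\,C_{k+1}(t) \;=\; (2e+1-k)\,C_{k-1}(t) + (1+t)(k-e)\,C_k(t).
\]
Assuming $C_e(\lambda_3)=C_{e+1}(\lambda_3)=0$, forward iteration (the denominators $\lambda_3(k+1)$ being nonzero for $e\le k\le 2e-1$) forces $C_k(\lambda_3)=0$ for all $e\le k\le 2e$, contradicting $C_{2e}=1$. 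Hence $c_{2p-1}\ne 0$.

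\textbf{Case (B).} Let $g(x):=x(x-1)(x-\lambda_3)$; the characteristic-$p$ expansion $(x-\mu)^{p-1}=\sum_{i=0}^{p-1}\mu^{p-1-i}x^i$ gives $f^{(p-1)/2}=(x-\mu)^{p-1}g^{(p-1)/2}$. Define $P(X):=\sum_{\ell=0}^{e}C_\ell(\lambda_3)X^\ell$ and $Q(X):=\sum_{\ell=e+1}^{2e}C_\ell(\lambda_3)X^\ell$, so $P+Q=\bigl((X-1)(X-\lambda_3)\bigr)^e$. A term-by-term extraction yields
\[
c_{p-1}=\mu^eP(\mu),\quad c_{p-2}=\mu^{e+1}\bigl(P(\mu)-\mu^eC_e(\lambda_3)\bigr),\quad \mu^ec_{2p-2}=\mu^eC_e(\lambda_3)+Q(\mu).
\]
Simultaneous vanishing (with $\mu\ne 0$) forces $P(\mu)=Q(\mu)=0$ and $C_e(\lambda_3)=0$, so $\bigl((\mu-1)(\mu-\lambda_3)\bigr)^e=(P+Q)(\mu)=0$, giving $\mu\in\{1,\lambda_3\}$; since $\mu\ne 1$, we must have $\mu=\lambda_3$, contradicting the subcase $\mu\ne\lambda_3$. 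In the remaining triple-collision subcase $\lambda_1=\lambda_2=\lambda_3=\mu$, the characteristic-$p$ factorization $(x-\mu)^{3e}=(x^p-\mu^p)(x-\mu)^{e-1}$ produces the relations $c_{p-j}=-\mu^pc_{2p-j}$ for $j=1,2$; a direct coefficient computation using $\binom{e}{k+1}\binom{e-1}{k}=(k+1)\binom{e}{k+1}^2/e$ then yields
\[
c_{2p-1}=\tfrac{(-1)^{e-1}}{e}H_p'(\mu),\qquad c_{2p-2}=\tfrac{(-1)^e}{e}\bigl(eH_p(\mu)-\mu H_p'(\mu)\bigr),
\]
whose joint vanishing forces $H_p(\mu)=H_p'(\mu)=0$, contradicting Igusa's separability.

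\textbf{Main obstacle.} The delicate step is the triple-collision subcase of Case (B), where the four Cartier--Manin entries collapse (via $c_{p-j}=-\mu^pc_{2p-j}$) to two polynomial conditions in the single variable $\mu$; the resolution relies on the clean identification $c_{2p-1}\propto H_p'(\mu)$ via the combinatorial identity above, after which Igusa's separability of $H_p$ closes the argument.
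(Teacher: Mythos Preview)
Your argument is correct, and it follows a genuinely different route from the paper's proof. The paper exploits the full affine group of coordinate changes $x\mapsto ux+v$ (citing \cite[Lemma~2.4]{KHS} for invariance of the vanishing of the Cartier--Manin matrix) to collapse \emph{all} degenerations to the single case $\lambda_3=0$, and then inside that case it invokes two further external facts: the identity $\delta_{p-2}=\lambda_1\delta_p$ for the coefficients of $\bigl(x(x-1)(x-\lambda_1)\bigr)^e$, and the nonvanishing $\delta_{p-2}\ne 0$ from \cite[Proposition~4.3]{KHS}. Your proof, by contrast, uses only permutation symmetry and $x\mapsto 1-x$, at the cost of handling the double- and triple-collision cases (B) explicitly; in exchange you obtain a self-contained argument. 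Your recursion $t(k{+}1)C_{k+1}=(2e{+}1{-}k)C_{k-1}+(1{+}t)(k{-}e)C_k$ subsumes the paper's identity (the case $k=e$ gives exactly $C_{e-1}=tC_{e+1}$, i.e.\ $\delta_{p-2}=\lambda_1\delta_p$), and your forward propagation to $C_{2e}=1$ replaces the appeal to \cite[Proposition~4.3]{KHS}. The triple-collision computation, culminating in $c_{2p-1}\propto H_p'(\mu)$ and $c_{2p-2}\propto eH_p(\mu)-\mu H_p'(\mu)$, is an elegant direct reduction to Igusa's theorem that the paper sidesteps entirely via its stronger reduction.

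Two minor remarks. First, the phrase ``Igusa's separability yields $\lambda_3\ne 0,1$'' in Case~(A) is slightly imprecise: what you need there is $H_p(0)=1\ne 0$ and $H_p(1)=\binom{p-1}{e}\equiv(-1)^e\ne 0$, which is standard but not a consequence of separability per se. Second, the forward iteration in Case~(A) requires $e+2,\dots,2e$ to be invertible modulo $p$; for $p=3$ one has $e{+}1=2e$, so no iteration is needed and $C_{2e}=1$ already contradicts $C_{e+1}=0$ directly. Neither point affects correctness.
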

\begin{proof}
According to \cite[Lemma 2.4]{KHS},
by any linear coordinate change, say $x \mapsto ux+v$ for constants $u(\ne 0)$ and $v$, the $\sigma$-conjugacy class of the Cartier-Manin matrix (especially whether the Cartier-Manin matrix is zero or not) is unchanged.
Hence, without loss of generality it suffices to show that if $\lambda_3 = 0$, then
the Cartier-Manin matrix is not zero. 
Assume $\lambda_3 = 0$, i.e., 
consider the case $f(x)=x^2(x-1)(x-\lambda_1)(x-\lambda_2)$.
Note that $c_{p-2}=0$ always holds.
It suffices to show that
$c_{p-1}=c_{2p-2}=0$ implies $c_{2p-1} \ne 0$.
Assume $c_{p-1}=c_{2p-2}=0$.
Since $c_{p-1}=(-\lambda_1 \lambda_2)^e$, we may assume $\lambda_2=0$
without loss of generality, whence $f(x)=x^3(x-1)(x-\lambda_1)$.
Put $g(x) = x(x-1)(x-\lambda_1)$. We have
\[
f(x)^e = x^{p-1} g(x)^e
\]
Let $\delta_i$ be the $x^i$-coefficient of $g(x)^e$.
Then $\delta_{p-1}$ is zero, since it is equal to $c_{2p-2}$.
According to \cite[Chap.\ 5, Theorem 4.1]{Silverman} with its proof,
$\lambda_1$ is a solution of $H_p(t)=0$ and $\lambda_1 \ne 0,1$. 
We claim $\delta_p \ne 0$.
This is obtained by the relation $\delta_{p-2} = \lambda_1 \cdot \delta_p$ (that follows from a straightforward computation, the authors learned this relation from Ryo Ohashi) and $\delta_{p-2}\ne 0$ that follows from \cite[Proposition 4.3]{KHS}.
Since $\delta_p$ is the $x^{2p-1}$-coefficient $c_{2p-1}$ of $f(x)^e$, we have $c_{2p-1} \ne 0$.
\end{proof}

\section{The main results}
In this section, we collect the main theorems of this paper.

Let $j\in\{1,2\}$.
We consider 
the partial differential operators $\diff{\ell}$ as in \eqref{eq:Dl}
associated to $a = 5/2 - j$, $(b_1,b_2,b_3)=(1/2,1/2,1/2)$
and $c = 3-j$ for $\ell=1,2,3$.
We denote it by $\diffchar{\ell}{j}$,
because it depends on $j$.

\thmA*


As an application of this theorem and the contiguity relations obtained in Subsection \ref{subsec:Contiguity}, we have the following theorem,
that can be regard as an analogous result in the genus-two case of Igusa's result \cite{Igusa} in the genus-one case.
We call this theorem {\it the multiplicity-one theorem for the superspeciality of the curve $y^2=x(x-1)(x-z_1)(x-z_2)(x-z_3)$}.

\thmB*


\section{Proof of Theorem \ref{thm:PDEinPositiveChar}}
In this section, we give a proof of Theorem \ref{thm:PDEinPositiveChar}.
We use the notation in Definition \ref{def:Truncation}.
\begin{proof}[Proof of Theorem \ref{thm:PDEinPositiveChar}]
To show the first equation,
it suffices to show that $\diffchar{\ell}{j} \truncatedAL = 0$,
since
$c_{ip-j} = (-1)^\frac{p-1}{2}\cdot\dfrac{j}{i} \truncatedAL$.
From the symmetry for $\ell,$ we prove only the case of $\ell=1$.

Let $\Tilde{A}_{\nnn}^{(i,j)}$ be
the $\zzz$-coefficients of $\truncatedAL$. Then
\[
\Tilde{A}_{\nnn}^{(i,j)} = A_{\nnn}^{(i,j)}=\dfrac{ \an[\sum_{k=1}^3 n_k]{a'} \prod_{k=1}^3 \an[n_k]{b'} }
{ \an[\sum_{k=1}^3 n_k]{c'} \prod_{k=1}^3 \an[n_k]{1} }
\]
for $(\nnn)\in\supp,$ and $\Tilde{A}_{\nnn}^{(i,j)}=0$ otherwise.
By Remark \ref{rem:StandardRelation}, we have
\[
\diffchar{1}{j} \truncatedAL
=\sum_{\nnn\ge0} C_{\nnn}^{(i,j)} \zzz,
\]
where $C_{\nnn}^{(i,j)}$ is
\[
(1+n_1)(c'+n_1+n_2+n_3) \Tilde{A}_{\nnn[+1][][]}^{(i,j)}
-(b'+n_1)(a'+n_1+n_2+n_3) \Tilde{A}_{\nnn}^{(i,j)}
\]
for $(\nnn)\in\NNI^3.$ We check that $C_{\nnn}^{(i,j)}$ vanishes for every $(\nnn)\in\NNI^3$, dividing into five cases.

\underline{Case 1.} We consider the case when $\nnn$ satisfies
\[
n_1\le\frac{p-1}{2}-1,\,n_k\le\frac{p-1}{2}\,(k=2,3),\,d'-\frac{p-1}{2}\le n_1+n_2+n_3\le d'-1.
\]
Since
\[
n_1+1\le\frac{p-1}{2},\,d'-\frac{p-1}{2}+1\le (n_1+1)+n_2+n_3\le d',
\]
$(\nnn),(\nnn[+1][][])\in\supp,$ and by the standard recurrence relation, $C_{\nnn}^{(i,j)}=0.$

\underline{Case 2.} We consider the case when $n_1 = (p-1)/{2}$.
Since $n_1+1>(p-1)/2$ deduces $\Tilde{A}_{\nnn[+1][][]}^{(i,j)}=0,$ and $b'+n_1=p/2=0,$ it holds $C_{\nnn}^{(i,j)}=0.$

\underline{Case 3.}  We consider the case when $n_1+n_2+n_3 = d'-(p-1)/{2}-1$. First, note $n_1+n_2+n_3 < d'-(p-1)/2$ deduces $\Tilde{A}_{\nnn}^{(i,j)}=0.$ 
\textcolor{black}{Note $a' + d' = 0$ in ${\mathbb F}_p$ in general}.
Since $c'+n_1+n_2+n_3 = (a'+1/2)+ ( d'-(p-1)/2-1 ) = a'+d' = 0,$ it holds $C_{\nnn}^{(i,j)}=0.$

\underline{Case 4.}  We consider the case when $n_1+n_2+n_3 = d'$. Since $d' < (1+n_1)+n_2+n_3$ deduces  $\Tilde{A}_{\nnn[+1][][]}^{(i,j)}=0,$ and $a'+n_1+n_2+n_3 = a'+d' = 0,$ it holds $C_{\nnn}^{(i,j)}=0.$

\underline{Case 5.} We consider the case when $\nnn$ satisfies $(p-1)/2 < n_1$ or $(p-1)/2 < n_2$ or $(p-1)/2 < n_3$ or $ n_1+n_2+n_3\le d'-(p-1)/2-2$ or $ d'+1 \le n_1+n_2+n_3.$ In this case, neither $(\nnn)$ nor $(\nnn[+1][][])$ belongs to $\supp.$ Hence, $\Tilde{A}_{\nnn}^{(i,j)}=\Tilde{A}_{\nnn[+1][][]}^{(i,j)}=0,$ and it holds $C_{\nnn}^{(i,j)}=0.$

To show the second equation,
we only need to show $\diffsimp{\ell}{m}\truncatedAL=0$, since 
$c_{ip-j} = (-1)^\frac{p-1}{2}\cdot\dfrac{j}{i} \truncatedAL$.  From the symmetry for $(\ell, m)$, it suffices to consider the case of $(\ell,m)=(1,2)$. 
The $\zzz$-coefficient of
\begin{eqnarray*}
\diffsimp{1}{2}\truncatedAL
&=&\sum_{\nnn\ge0}
    \Bigl(
        n_1 n_2(\zzz[][-1][]-\zzz[-1][][])\\
&&\hspace{4em} +\frac{1}{2}n_2\zzz[][-1][]-\frac{1}{2}n_1\zzz[-1][][]
    \Bigr)
    \Tilde{A}_{\nnn}^{(i,j)}
\end{eqnarray*}
is
\[
\left(n_1+\frac{1}{2}\right) \left(n_2+1\right)\Tilde{A}_{\nnn[][+1][]}^{(i,j)}-\left(n_1+1\right)\left(n_2+\frac{1}{2}\right)\Tilde{A}_{\nnn[+1][][]}^{(i,j)}.
\]
This vanishes by the standard recurrence relations in Lemma \ref{lem:recurrence_relations} \textcolor{black}{(1) for $n_2$ and $n_1$}.
\end{proof}

\if0
\begin{thm}[To be checked]
(Consider the case that $\sigma=5(p-1)/2 -ip+j$ and $\tau = (p-1)/2$ and $c=3-j$.)
$c_{ip-j}$ is the unique \textcolor{black}{nonzero} solution (up to scalar)
of $D_l w=0$ satisfying in addition
\begin{enumerate}
    \item[\rm (1)] if $(i_1,i_2,i_3)$-coefficients of $w$ is \textcolor{black}{nonzero}, then
    $i_l \le \tau$ for all $l=1,2,3$ and $\sigma-\tau \le i_1+i_2+i_3 \le \sigma$.
     \item[\rm (2)]  (Automatic?nara fuyou)$w$ is symmetric w.r.t the action of $\mathfrak{S}_3$;
\end{enumerate}
Question: does we have in (3-4) for $(i_1,i_2,i_3)$ with $i_1+i_2+i_3=\sigma-\tau$,
\[
(a+i_1+i_2+i_3)(b_l+i_l) \ne 0
\]
in characteristic $p>0$? Note $a=\dfrac{5}{2}-j$ and $b=1/2$.

for example the coefficient of $z_2$ is determined,
then the coefficient of $z_1z_2$ is determined and
then the coefficient of $z_1$ is determined.
\end{thm}
\fi

\section{Proof of the multiplicity-one theorem
}
In this section, we prove Theorem \ref{thm:MultiplicityOne}.

\subsection{The Jacobian criterion}
Let $k$ be an algebraically closed field.
Let $R=k[x_1,\ldots,x_n]$. Let $I$ be an ideal of $R$ such that $R/I$ is an Artin ring
(i.e., $I$ is zero dimensional).
\begin{Def}
We say $I$ is of {\it multiplicity-one} if  $\dim_k (R/I)_\mathfrak{P} = 1$ for any $\mathfrak{P}$ in $\mathrm{Spec}(R/I)$.
\end{Def}
\begin{Lem}
$I=(f_1,\ldots,f_m)$ is muplicplicity one if and only if for any $\mathfrak{P}$ in $\mathrm{Spec}(R/I)$
the Jacobian matrix $\left(\dfrac{\partial f_j}{\partial x_i}\right)$ at $\mathfrak{P}$ is of rank $n$.
\end{Lem}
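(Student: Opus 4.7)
The plan is to pass to a local analysis at each closed point and use Nakayama together with a Taylor expansion to identify the generators of the maximal ideal with rows of the Jacobian.

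Since $R/I$ is Artinian, $\mathrm{Spec}(R/I)$ consists of finitely many maximal ideals, and $R/I$ decomposes as the product of its localizations:
\[
R/I \;\cong\; \prod_{\mathfrak{P}\in\mathrm{Spec}(R/I)} (R/I)_{\mathfrak{P}}.
\]
Hence $I$ is of multiplicity one if and only if $(R/I)_{\mathfrak{P}} = k$ for every $\mathfrak{P}$. First I would fix such a $\mathfrak{P}$ and let $\mathfrak{m}\subset R$ denote the corresponding maximal ideal. Because $k$ is algebraically closed, by Nullstellensatz $\mathfrak{m}=(x_1-a_1,\dots,x_n-a_n)$ for a unique point $P=(a_1,\dots,a_n)\in k^n$, and $R_{\mathfrak{m}}$ is a regular local ring of dimension $n$ with residue field $k$.

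Under the identification $(R/I)_{\mathfrak{P}} = R_{\mathfrak{m}}/IR_{\mathfrak{m}}$, the condition $(R/I)_{\mathfrak{P}}=k$ is equivalent to $IR_{\mathfrak{m}}=\mathfrak{m}R_{\mathfrak{m}}$. Since $\mathfrak{m}R_{\mathfrak{m}}$ is the unique maximal ideal of the Noetherian local ring $R_{\mathfrak{m}}$ and $f_1,\dots,f_m\in IR_{\mathfrak{m}}\subset \mathfrak{m}R_{\mathfrak{m}}$, Nakayama's lemma tells us that $IR_{\mathfrak{m}}=\mathfrak{m}R_{\mathfrak{m}}$ if and only if the images of $f_1,\dots,f_m$ span the $k$-vector space $\mathfrak{m}R_{\mathfrak{m}}/\mathfrak{m}^2R_{\mathfrak{m}}\cong \mathfrak{m}/\mathfrak{m}^2$, which has dimension $n$ with basis $\{x_1-a_1,\dots,x_n-a_n\}$.

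The final step is the Taylor expansion at $P$: writing
\[
f_j \;=\; f_j(P) \;+\; \sum_{i=1}^n \frac{\partial f_j}{\partial x_i}(P)\,(x_i-a_i) \;+\; (\text{terms in }\mathfrak{m}^2),
\]
and noting that $f_j(P)=0$ because $f_j\in I\subset\mathfrak{m}$, the image of $f_j$ in $\mathfrak{m}/\mathfrak{m}^2$ is precisely $\sum_{i}\frac{\partial f_j}{\partial x_i}(P)(x_i-a_i)$. Therefore the $k$-span of the images of $f_1,\dots,f_m$ is $\mathfrak{m}/\mathfrak{m}^2$ exactly when the $n\times m$ matrix $\bigl(\frac{\partial f_j}{\partial x_i}(P)\bigr)$ has rank $n$, i.e., when the Jacobian matrix at $\mathfrak{P}$ has rank $n$. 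Combining this with the decomposition in the first step yields the lemma.

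There is no serious obstacle here; the only point requiring care is the identification of the image of $f_j$ in $\mathfrak{m}/\mathfrak{m}^2$ with the linear part of its Taylor expansion, but this is immediate from the definition of $\mathfrak{m}$ as the ideal of polynomials vanishing at $P$.
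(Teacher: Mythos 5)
Your proof is correct. The first step (reducing to the local rings at the finitely many points of $\mathrm{Spec}(R/I)$) is essentially the same as the paper's, which gets there via the primary decomposition $I=\bigcap_i\mathfrak q_i$ and the Chinese remainder theorem; your product decomposition of the Artinian ring $R/I$ is that same statement. Where you diverge is in the second half: the paper translates $\dim_k(R/I)_{\mathfrak P}=1$ into $\mathfrak q_i=\mathfrak p_i$, i.e.\ into nonsingularity of $\mathrm{Spec}(R/I)$, and then simply cites the Jacobian criterion; you instead prove the relevant instance of that criterion from scratch, observing that $(R/I)_{\mathfrak P}=k$ is equivalent to $IR_{\mathfrak m}=\mathfrak m R_{\mathfrak m}$, applying Nakayama to reduce this to the images of $f_1,\dots,f_m$ spanning $\mathfrak m/\mathfrak m^2$, and identifying those images with the columns of the Jacobian via the Taylor expansion at the point (using the Nullstellensatz to write $\mathfrak m=(x_1-a_1,\dots,x_n-a_n)$). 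Your route is longer but self-contained and makes transparent exactly where the rank-$n$ condition comes from, which is arguably preferable here since the lemma is only stated for the zero-dimensional case; the paper's route is shorter at the cost of invoking the Jacobian criterion as a black box. Both are complete arguments.
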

\begin{proof}
Let $I = \bigcap_i \mathfrak{q}_i$
be the irredundant primary decomposition of $I$. Set $\mathfrak{p}_i = \sqrt{\mathfrak{q}_i}$.
Since $A/I$ is an Artin ring, $\mathfrak{p}_i$ is a maximal ideal.
Hence we have $\mathfrak{p}_i + \mathfrak{p}_j = R$ for $i \ne j$ and therefore
$\mathfrak{q}_i + \mathfrak{q}_j = R$. By the Chinese remainder theorem, we get
an isomorphism $\Psi: R/I \simeq \bigoplus_i R/\mathfrak{q}_i$.

We claim that $(R/I)_\mathfrak{P} \simeq R/\mathfrak{q}_i$ for $i$ with $\sqrt{\mathfrak{q}_i} = \mathfrak{p}$,
where $\mathfrak p$ is the inverse image of $\mathfrak{P}$ by $R \to R/I$.
Indeed $\Psi$ sends $\mathfrak{P}$ (resp. $S:=(R/I) \setminus \mathfrak{P}$) to the product of $\mathfrak p/\mathfrak{q}_i$ (resp.\ $(R/\mathfrak{q}_i)^\times$) and $R/\mathfrak{q}_j$ for $j\ne i$.
Since the idempotent on $\bigoplus_i R/\mathfrak{q}_i$ which is the identity on $R/\mathfrak{q}_i$ and zero on $R/\mathfrak{q}_j$ for $j\ne i$
belongs to $S$, the localization $(R/I)_\mathfrak{P}$ is equal to the localization
$(R/\mathfrak{q}_i)_{\mathfrak p/\mathfrak{q}_i} = R/\mathfrak{q}_i$.

Clearly $(R/I)_\mathfrak{P} \simeq R/\mathfrak{q}_i$ is a one-dimensional $k$-vector space if and only if $\mathfrak{q}_i = \mathfrak{p}$.
This is equivalent to that $\mathrm{Spec}(R/I)$ is nonsingular. The Jacobian criterion for the nonsingularity implies the desired assertion.
\end {proof}

\subsection{Contiguity relations}\label{subsec:Contiguity}
Inspired by the contiguity relations in \cite[Theorem 3.8.1]{Matsumoto}
of Lauricella hypergeometric series,
we have some relations on $c_{ip-j}$
with some partial differential operators.
See Remark \ref{rem:OriginalContiguityRelations} for the relations directly predicted from 
\cite[Theorem 3.8.1]{Matsumoto} and Theorem \ref{thm:PDEinPositiveChar},
which we do not use for the proof of Theorem \ref{thm:MultiplicityOne}.
The relations necessary for the proof of Theorem \ref{thm:MultiplicityOne}
are those in the next theorem, which look more mysterious.
The \textcolor{black}{authors} found the relations with the observation that the left-hand sides
vanish at all the superspecial points,
i.e., it must belong to the ideal generated by $c_{ip-j}$.
The second relation is reminiscent of \cite[Lemma 4.2]{KHS} in the genus-one case.

\begin{Thm}\label{thm:ContiguityRelation}
For $i=1,2$, the following equalities hold:
\begin{enumerate}
\if0
\item[(1)]
$\displaystyle
-\left( \sum_{k=1}^3 {(1-z_k)\partial_k} -1 \right) c_{ip-2}
	= \left( \sum_{k=1}^3 {z_k(1-z_k)\partial_k} -\sum_{k=1}^3 {\dfrac{1}{2}
	z_k} +\dfrac{1}{2} \right) c_{ip-1},
$
\item[(2)]
$\displaystyle
\left( \sum_{k=1}^3 {z_k\partial_k} +\dfrac{1}{2} \right) c_{ip-2}
	= \left( \sum_{k=1}^3 {z_k\partial_k} +1 \right) c_{ip-1}.
$
\fi
\item[(1)]
$\displaystyle
\left(\sum_{k=1}^3 (z_k^2-z_k)\partial_k\right) c_{ip-1} =-\dfrac{1}{2}(z_1+z_2+z_3-2)c_{ip-1}-\dfrac{1}{2}c_{ip-2},
$
\if0
\[
\left(\sum_{k=1}^3 (z_k^2-z_k)\partial_k\right) \gamma_{2p-1} =-\dfrac{1}{2}(z_1+z_2+z_3-2)\gamma_{2p-1}-\dfrac{1}{2}\gamma_{2p-2}
\]
\[
\left(\sum_{k=1}^3 (z_k^2-z_k)\partial_k\right) \gamma_{p-1} =
-\dfrac{1}{2}(z_1+z_2+z_3-2)\gamma_{p-1}
-\dfrac{1}{2}\gamma_{p-2}.
\]
\fi
\item[(2)]
$\displaystyle
\left(\sum_{k=1}^3 (1-z_k)\partial_k\right) c_{ip-2} = \dfrac{1}{2}(c_{ip-1}+c_{ip-2}).
$
\end{enumerate}
\end{Thm}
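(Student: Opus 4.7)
My plan is to prove both identities by comparing the coefficients of the monomials $\zzz$ on the two sides. By Lemma \ref{lem:simplification}, $c_{ip-j}=(-1)^{(p-1)/2}(j/i)\truncatedAL$, so after clearing the common factor $(-1)^{(p-1)/2}/i$ the two relations become
\[
\left(\sum_{k=1}^3(z_k^2-z_k)\partial_k\right)\truncatedAL[i][1]=-\tfrac12(z_1+z_2+z_3-2)\truncatedAL[i][1]-\truncatedAL[i][2]
\]
and
\[
4\left(\sum_{k=1}^3(1-z_k)\partial_k\right)\truncatedAL[i][2]=\truncatedAL[i][1]+2\truncatedAL[i][2].
\]
Abbreviating the multi-index $(\nnn)$ by $n$, letting $e_k$ denote the $k$-th standard basis vector, and writing $N=n_1+n_2+n_3$, the $\zzz$-coefficient of the first identity reads
\[
\sum_{k=1}^3(n_k-\tfrac12)\tilde A^{(i,1)}_{n-e_k}=(N+1)\tilde A^{(i,1)}_{n}-\tilde A^{(i,2)}_{n},
\]
while that of the second is
\[
4\sum_{k=1}^3(n_k+1)\tilde A^{(i,2)}_{n+e_k}-4N\tilde A^{(i,2)}_{n}=\tilde A^{(i,1)}_{n}+2\tilde A^{(i,2)}_{n}.
\]

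For $n$ lying in the intersection $\supp[i][1]\cap\supp[i][2]$, both identities collapse immediately from Lemma \ref{lem:recurrence_relations}(2): the interchange formula \eqref{eq:ALinterchanging}, rewritten as $\tilde A^{(i,1)}_{n}=\frac{1+2N}{1+N}\tilde A^{(i,2)}_{n}$, together with the symmetric version of \eqref{eq:ALinterchanging2} in each variable, $\tilde A^{(i,2)}_{n+e_k}=\frac{\frac12+n_k}{2(1+n_k)}\tilde A^{(i,1)}_{n}$ (which follows from Lemma \ref{lem:recurrence_relations}(1) by the same manipulation as in \eqref{eq:ALrecrel}), reduces each identity to an elementary rational-function identity in $N$. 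For instance the first becomes $2N\tilde A^{(i,2)}_{n}=(N+1)\cdot\frac{1+2N}{1+N}\tilde A^{(i,2)}_{n}-\tilde A^{(i,2)}_{n}$, which is a tautology, and the second reduces similarly to $\frac{3+4N}{1+2N}\tilde A^{(i,1)}_{n}=\tilde A^{(i,1)}_{n}+2\cdot\frac{1+N}{1+2N}\tilde A^{(i,1)}_{n}$.

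The real work is at the indices on the boundary of the two supports, where the truncations disagree. Since the values of $d'$ attached to $j=1$ and $j=2$ differ by $1$, the supports $\supp[i][1]$ and $\supp[i][2]$ are shifted by one layer in the $N$-direction, and there are additional truncation jumps on the hyperplanes $n_k=(p-1)/2$. Parallel to the five-case decomposition used in the proof of Theorem \ref{thm:PDEinPositiveChar}, I would enumerate the boundary hyperplanes on which $n$ can lie and verify in each case that either both sides are manifestly zero, or the remaining nonzero terms cancel modulo $p$ because a Pochhammer-type factor of the form $\tfrac12+n_k$, $1+N$, $a'+N$, or $c'+N$ vanishes in $\mathbb{F}_p$.

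The main obstacle will be exactly this boundary bookkeeping: the interchange formulas above are literally valid only on the intersection of supports, so at each boundary layer one must identify the precise mod-$p$ cancellation that forces the ``missing'' coefficient on the deficient side to behave as the formal identity predicts. Keeping the several boundary cases cleanly organized---there are more of them here than in the proof of Theorem \ref{thm:PDEinPositiveChar} because the two supports differ---is the technical crux of the argument.
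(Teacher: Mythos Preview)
Your plan is correct and is essentially the paper's own proof: both sides are expanded in the truncated coefficients $\tilde A^{(i,j)}_n$, the recurrence and interchange relations of Lemma~\ref{lem:recurrence_relations} dispose of the ``interior'' terms exactly as you compute, and the remaining work is the boundary bookkeeping you anticipate. In the paper this bookkeeping is organized by introducing regions $\mathsf R^{(i,1)}_k,\mathsf S^{(i,1)}_k,\mathsf T^{(i,1)}_k$ for part~(1) and $\mathsf U^{(i,2)}_k,\mathsf V^{(i,2)}_k$ for part~(2), followed by a split into the cases $i=1$ and $i=2$ (since the shapes of $\supp[1][j]$ and $\supp[2][j]$ differ), where the needed mod-$p$ vanishings come from $n_k-\tfrac12\equiv 0$ on $n_k=(p-1)/2$ and $1+N\equiv 0$ on $N=p-1$, just as you predict.
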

\begin{proof}

We use the standard unit vectors:
\[
\bm{e}_1 = (1,0,0),\,\bm{e}_2 = (0,1,0),\,\bm{e}_3 = (0,0,1)\textcolor{black}{.}
\]
\textcolor{black}{For} an element \textcolor{black}{$\bm{n}=(n_1,n_2,n_3)$} of $\NNI^3$
\textcolor{black}{let $\left| \bm{n} \right| = n_1+n_2+n_3$ denote the total degree of the monomial $\bm{z}^{\bm{n}}= z_1^{n_1} z_2^{n_2} z_3^{n_3}$,} and
\textcolor{black}{for} a subset $S$ of $\NNI^3$ we denote 
the set $\{\bm{\textcolor{black}{n}}+s | s\in S\}$ by $\bm{\textcolor{black}{n}}+S$,
which will be used for $S=\supp$, see \eqref{eq:supp} for the definition of $\supp$.

(1)
\textcolor{black}{
Let $\varphi(z_1,z_2,z_3) \in {\mathbb{F}_p}[z_1,z_2,z_3]$ be
\[
(-1)^\frac{p-1}{2}i\left(\sum_{k=1}^3 (z_k^2-z_k)\partial_k c_{ip-1}+\dfrac{1}{2}(z_1+z_2+z_3-2)c_{ip-1}+\dfrac{1}{2}c_{ip-2}\right).
\]
We show that $\varphi$ is zero.}
\textcolor{black}{After some simple computations,}
\textcolor{black}{$\varphi$ is equal to}
\begin{equation*}
\sum_{k=1}^3\sum_{\textcolor{black}{\bm{n}\in}\bm{e}_k+\supp[i][1]}
\left(n_k-\frac{1}{2}\right)\Tilde{A}_{\textcolor{black}{\bm{n}}-\bm{e}_k}^{(i,1)}\textcolor{black}{\bm{z}^{\bm{n}}}
-\sum_{\textcolor{black}{\bm{n}\in}\supp[i][1]}(1+\textcolor{black}{\left| \bm{n} \right|})\Tilde{A}_{\textcolor{black}{\bm{n}}}^{(i,1)}\textcolor{black}{\bm{z}^{\bm{n}}} + \sum_{\textcolor{black}{\bm{n}\in}\supp[i][2]}\Tilde{A}_{\textcolor{black}{\bm{n}}}^{(i,2)}\textcolor{black}{\bm{z}^{\bm{n}}}.
\end{equation*}
In order to distinguish the terms involved in the translated regions from those not, we \textcolor{black}{denote}, for $i=1,2$ and for $k=1,2,3,$
\begin{equation*}
\left(\bm{e}_k+\supp[i][1]\right)\bigcap\supp[i][1],
\left(\bm{e}_k+\supp[i][1]\right) \setminus \supp[i][1],
\supp[i][1] \setminus \left(\bm{e}_k+\supp[i][1]\right)
\end{equation*}
\textcolor{black}{by $\mathsf{R}^{(i,1)}_k, \mathsf{S}^{(i,1)}_k, \mathsf{T}^{(i,1)}_k$ respectively.}
Using the disjoint union $\bm{e}_k+\supp[i][1] = \mathsf{R}^{(i,1)}_k \sqcup \mathsf{S}^{(i,1)}_k,$ \textcolor{black}{we see that $\varphi$ is}
\begin{align}
&\sum_{k=1}^3\sum_{\textcolor{black}{\bm{n}\in}\mathsf{R}^{(i,1)}_k}
    \left(n_k-\frac{1}{2}\right)\Tilde{A}_{\textcolor{black}{\bm{n}}-\bm{e}_k}^{(i,1)}\textcolor{black}{\bm{z}^{\bm{n}}} + \sum_{k=1}^3\sum_{\textcolor{black}{\bm{n}\in}\mathsf{S}^{(i,1)}_k}
    \left(n_k-\frac{1}{2}\right)\Tilde{A}_{\textcolor{black}{\bm{n}}-\bm{e}_k}^{(i,1)}\textcolor{black}{\bm{z}^{\bm{n}}}\notag\\
&-\sum_{\textcolor{black}{\bm{n}\in}\supp[i][1]}(1+\textcolor{black}{\left| \bm{n} \right|}
)\Tilde{A}_{\textcolor{black}{\bm{n}}}^{(i,1)}\textcolor{black}{\bm{z}^{\bm{n}}}+\sum_{\textcolor{black}{\bm{n}\in}\supp[i][2]}\Tilde{A}_{\textcolor{black}{\bm{n}}}^{(i,2)}\textcolor{black}{\bm{z}^{\bm{n}}}.\label{eq:intermediate_f}
\end{align}
\textcolor{black}{We note that for} any $\textcolor{black}{\bm{n}=}(\nnn) \in \supp[i][1]$ and for any $k=1,2,3,$ it follows, by using the standard recurrence relation, that
\[
\left(n_k-\frac{1}{2}\right)\Tilde{A}_{\textcolor{black}{\bm{n}}-\bm{e}_k}^{(i,1)}\textcolor{black}{\bm{z}^{\bm{n}}}\\
=\frac{(1+\textcolor{black}{\left| \bm{n} \right|}
)n_k}{\frac{1}{2}+\textcolor{black}{\left| \bm{n} \right|}
}\Tilde{A}_{\textcolor{black}{\bm{n}}}^{(i,1)}\textcolor{black}{\bm{z}^{\bm{n}}},
\]
which can be considered to be true even when $n_k = 0.$
\textcolor{black}{After some computations using $\mathsf{R}^{(i,1)}_k = \supp[i][1] \setminus \mathsf{T}^{(i,1)}_k\,(\text{note } \supp[i][1] = \mathsf{R}^{(i,1)}_k \sqcup \mathsf{T}^{(i,1)}_k)\,(k=1,2,3)$, $\varphi=$ \eqref{eq:intermediate_f} is}
\begin{align}
&\sum_{k=1}^3\sum_{\textcolor{black}{\bm{n}\in}\mathsf{S}^{(i,1)}_k}
    \left(n_k-\frac{1}{2}\right)\Tilde{A}_{\textcolor{black}{\bm{n}}-\bm{e}_k}^{(i,1)}\textcolor{black}{\bm{z}^{\bm{n}}} -\sum_{k=1}^3\sum_{\textcolor{black}{\bm{n}\in}\mathsf{T}^{(i,1)}_k}
    \frac{(1+\textcolor{black}{\left| \bm{n} \right|}
)n_k}{\frac{1}{2}+\textcolor{black}{\left| \bm{n} \right|}
}\Tilde{A}_{\textcolor{black}{\bm{n}}}^{(i,1)}\textcolor{black}{\bm{z}^{\bm{n}}}\notag\\
&-\frac{1}{2}\sum_{\textcolor{black}{\bm{n}\in}\supp[i][1]}\frac{1+\textcolor{black}{\left| \bm{n} \right|}
}{\frac{1}{2}+\textcolor{black}{\left| \bm{n} \right|}
}\Tilde{A}_{\textcolor{black}{\bm{n}}}^{(i,1)}\textcolor{black}{\bm{z}^{\bm{n}}} + \sum_{\textcolor{black}{\bm{n}\in}\supp[i][2]}\Tilde{A}_{\textcolor{black}{\bm{n}}}^{(i,2)}\textcolor{black}{\bm{z}^{\bm{n}}}.\label{eq:contiguity1}
\end{align}
The proof of the vanishing of this sum splits into two cases (Case $i=1$ and Case $i=2$).
Before we get into the details, we explain briefly how
the terms cancel out:
Every term of the sum involving $\mathsf{T}^{(i,1)}_k$ vanishes itself for each $i$. In the case of $i=1$, every term of the sum involving $\mathsf{S}^{(1,1)}_k$ also vanishes itself and the other two sums cancel out with the help of the standard recurrence relations. In the case of $i=2$, the sum involving $\mathsf{S}^{(2,1)}_k$ and a portion of the sum involving $\supp[2][2]$ are combined to vanish and the other remaining sums vanish by the standard \textcolor{black}{recurrence} relations. Here is the detailed proof:

\underline{Case $i=1.$}
We note
\begin{align*}
\supp[1][1] =&\setsymbolin{\textcolor{black}{\bm{n}=}(n_1,n_2,n_3)}{\NNI^3}{\begin{array}{l}
    n_k \le \textcolor{black}{(p-1)/2} \ (k=1,2,3),\\
    p-1 \le \textcolor{black}{\left| \bm{n} \right|}
 \le \textcolor{black}{(3p-3)/2}
\end{array}},
\end{align*}
\textcolor{black}{and $\supp[1][2]$ is equal to}
\begin{equation*}
\setsymbolin{\textcolor{black}{\bm{n}}}{\NNI^3}{\begin{array}{l}
    n_k \le \textcolor{black}{(p-1)/2} \ (\textcolor{black}{\forall k}),\\
    p \le \textcolor{black}{\left| \bm{n} \right|} \le \textcolor{black}{(3p-1)/2}
\end{array}}
=\setsymbol{\textcolor{black}{\bm{n}}}{\begin{array}{l}
    n_k \le \textcolor{black}{(p-1)/2} \ (\textcolor{black}{\forall k}),\\
    p \le \textcolor{black}{\left| \bm{n} \right|} \le \textcolor{black}{(3p-3)/2}
\end{array}}.
\end{equation*}
\textcolor{black}{Moreover,} for $k=1,2,3$,
\begin{align*}
\mathsf{S}^{(1,1)}_k&=\left(\bm{e}_k+\supp[1][1]\right) \setminus \supp[1][1]
= \bm{e}_k + \setsymbol{\textcolor{black}{\bm{n}}\in\supp[1][1]}{n_k=\textcolor{black}{(p-1)/2}},\\
\mathsf{T}^{(1,1)}_k&=\supp[1][1] \setminus \left(\bm{e}_k+\supp[1][1]\right)
= \setsymbol{\textcolor{black}{\bm{n}}\in\supp[1][1]}{\textcolor{black}{\left| \bm{n} \right|}=p-1}.
\end{align*}
Note $\mathsf{T}^{(1,1)}_k$ is independent of $k=1,2,3.$
For each $(n'_1,n'_2,n'_3)\in\mathsf{S}^{(1,1)}_k,$
\textcolor{black}{$n'_k - 1/2 = (p+1)/2 - 1/2 \equiv 0 \pmod{p}$,}
\textcolor{black}{which implies}
\[
\sum_{\textcolor{black}{\bm{n}\in}\mathsf{S}^{(1,1)}_k}
    \left(n_k-\frac{1}{2}\right)\Tilde{A}_{\textcolor{black}{\bm{n}}-\bm{e}_k}^{(1,1)}\textcolor{black}{\bm{z}^{\bm{n}}} = 0;
\]
 \textcolor{black}{furthermore}, for each $\textcolor{black}{\bm{n'}=}(n'_1,n'_2,n'_3)\in\mathsf{T}^{(1,1)}_k,$ it holds true that
\begin{align*}
&\frac{(1+\textcolor{black}{\left| \bm{n'} \right|})n'_k}{\frac{1}{2}+\textcolor{black}{\left| \bm{n'} \right|}}\Tilde{A}_{\textcolor{black}{\bm{n'}}}^{(1,1)}\textcolor{black}{\bm{z}^{\bm{n'}}}
=\frac{\left(1+(p-1)\right)n'_k}{\frac{1}{2}+(p-1)}\Tilde{A}_{\textcolor{black}{\bm{n'}}}^{(1,1)}\textcolor{black}{\bm{z}^{\bm{n'}}}=0.
\end{align*}
Hence, \textcolor{black}{\eqref{eq:contiguity1} is equal to}
\begin{align*}
&-\frac{1}{2}\sum_{\textcolor{black}{\bm{n}\in}\supp[1][1]}\frac{1+\textcolor{black}{\left| \bm{n} \right|}}{\frac{1}{2}+\textcolor{black}{\left| \bm{n} \right|}}\Tilde{A}_{\textcolor{black}{\bm{n}}}^{(1,1)}\textcolor{black}{\bm{z}^{\bm{n}}}
+\sum_{\textcolor{black}{\bm{n}\in}\supp[1][2]}\Tilde{A}_{\textcolor{black}{\bm{n}}}^{(1,2)}\textcolor{black}{\bm{z}^{\bm{n}}}\\
=&-\frac{1}{2}\sum_{\textcolor{black}{\bm{n}\in}\supp[1][2]}\frac{1+\textcolor{black}{\left| \bm{n} \right|}}{\frac{1}{2}+\textcolor{black}{\left| \bm{n} \right|}}\Tilde{A}_{\textcolor{black}{\bm{n}}}^{(1,1)}\textcolor{black}{\bm{z}^{\bm{n}}}
+\sum_{\textcolor{black}{\bm{n}\in}\supp[1][2]}\Tilde{A}_{\textcolor{black}{\bm{n}}}^{(1,2)}\textcolor{black}{\bm{z}^{\bm{n}}}.
\end{align*}
Then, by \eqref{eq:ALinterchanging}, the last line above annihilates.

\underline{Case $i=2.$}
We note
\begin{align*}
\supp[2][1]&=\setsymbolin{\textcolor{black}{\bm{n}}}{\NNI^3}{-1 \le \textcolor{black}{\left| \bm{n} \right|} \le \textcolor{black}{(p-3)/2}
}
=\setsymbol{\textcolor{black}{\textcolor{black}{\bm{n}}}}{\textcolor{black}{\left| \bm{n} \right|} \le \textcolor{black}{(p-3)/2}},\\
\supp[2][2]&=\setsymbolin{\textcolor{black}{\bm{n}}}{\NNI^3}{\textcolor{black}{\left| \bm{n} \right|} \le \textcolor{black}{(p-1)/2}},
\end{align*}
and, for $k=1,2,3$,
\begin{align*}
\mathsf{S}^{(2,1)}_k&=\left(\bm{e}_k+\supp[2][1]\right) \setminus \supp[2][1]
= \bm{e}_k + \setsymbol{\textcolor{black}{\bm{n}}\in\supp[2][1]}{\textcolor{black}{\left| \bm{n} \right|}=\textcolor{black}{(p-3)/2}},\\
\mathsf{T}^{(2,1)}_k&=\supp[2][1] \setminus \left(\bm{e}_k+\supp[2][1]\right)
= \setsymbol{\textcolor{black}{\bm{n}}\in\supp[2][1]}{n_k=0}.
\end{align*}
The fact that for each $(n'_1,n'_2,n'_3)\in\mathsf{T}^{(2,1)}_k,$ $n'_k=0\quad(k=1,2,3)$ implies
\[
\sum_{k=1}^3\sum_{\textcolor{black}{\bm{n}\in}\mathsf{T}^{(2,1)}_k}
    \frac{(1+\textcolor{black}{\left| \bm{n} \right|})n_k}{\frac{1}{2}+\textcolor{black}{\left| \bm{n} \right|}}\Tilde{A}_{\textcolor{black}{\bm{n}}}^{(2,1)}\textcolor{black}{\bm{z}^{\bm{n}}} = 0.
\]
We see that
\begin{equation*}
\supp[2][2] \setminus \supp[2][1]
= \setsymbolin{\textcolor{black}{\bm{n}}}{\NNI^3}{\textcolor{black}{\left| \bm{n} \right|} = \textcolor{black}{(p-1)/2}}
=\bigcup_{k=1}^3\mathsf{S}^{(2,1)}_k.
\end{equation*}
Here, we claim that the following \textcolor{black}{part of \eqref{eq:contiguity1} is zero:}
\begin{equation}
\sum_{k=1}^3\sum_{\textcolor{black}{\bm{n}\in}\mathsf{S}^{(2,1)}_k}\left(n_k-\frac{1}{2}\right)\Tilde{A}_{\textcolor{black}{\bm{n}}-\bm{e}_k}^{(2,1)}\textcolor{black}{\bm{z}^{\bm{n}}} +\sum_{\textcolor{black}{\bm{n}\in}\supp[2][2] \setminus \supp[2][1]}\Tilde{A}_{\textcolor{black}{\bm{n}}}^{(2,2)}\textcolor{black}{\bm{z}^{\bm{n}}}
=0.\label{claim:surplusvanishing}
\end{equation}
We prove this claim below. We take an arbitrary exponent $\textcolor{black}{\bm{n'}=}(n'_1,n'_2,n'_3) \in \supp[2][2] \setminus \supp[2][1]$ of monomials that appear in \eqref{claim:surplusvanishing}, and we check the coefficient of the monomial $\textcolor{black}{\bm{z}^{\bm{n'}}}$ is zero. Note since $\textcolor{black}{\left| \bm{n'} \right|}=(p-1)/2\ge1$, one of $n'_k$ is \textcolor{black}{nonzero}. When exactly one of $n'_k$ is \textcolor{black}{nonzero}, from symmetry we can assume that $n'_1=(p-1)/2;n'_2=n'_3=0.$ In this case, we see that
\[
\textcolor{black}{\bm{n'}} \in \mathsf{S}^{(2,1)}_1,\,\textcolor{black}{\bm{n'}} \notin\mathsf{S}^{(2,1)}_k\ (k=2,3),
\]
and therefore, using \eqref{eq:ALinterchanging2}, the $\textcolor{black}{\bm{z}^{\bm{n'}}}$-coefficient is
\begin{equation*}
\left(n'_1-\frac{1}{2}\right)\Tilde{A}_{\textcolor{black}{\bm{n'}}-\bm{e_1}}^{(2,1)}+\Tilde{A}_{\textcolor{black}{\bm{n'}}}^{(2,2)}
=-\Tilde{A}_{\frac{p-3}{2},0,0}^{(2,1)}+\frac{\frac{1}{2}+\frac{p-3}{2}}{2\left(1+\frac{p-3}{2}\right)}\Tilde{A}_{\frac{p-3}{2},0,0}^{(2,1)}
=0.
\end{equation*}
When exactly two of $n'_k$ are \textcolor{black}{nonzero}, from symmetry we can assume that $n'_1+n'_2=(p-1)/2$ and $n'_1,n'_2\ge1,n'_3=0.$ In this case, we see that
\[
\textcolor{black}{\bm{n'}} \in \mathsf{S}^{(2,1)}_k\ (k=1,2)\textcolor{black}{,}\,\textcolor{black}{\bm{n'}} \notin\mathsf{S}^{(2,1)}_3,
\]
and therefore, using \eqref{eq:ALinterchanging2}, the $\textcolor{black}{\bm{z}^{\bm{n'}}}$-coefficient is
\[
\left(n'_1-\frac{1}{2}\right)\Tilde{A}_{\textcolor{black}{\bm{n'}-\bm{e_1}}}^{(2,1)}+\left(n'_2-\frac{1}{2}\right)\Tilde{A}_{\textcolor{black}{\bm{n'}-\bm{e_2}}}^{(2,1)}+\Tilde{A}_{\textcolor{black}{\bm{n'}}}^{(2,2)}.
\]
\textcolor{black}{We have \eqref{claim:surplusvanishing} by computing the first two terms of the above coefficient using \eqref{eq:ALinterchanging2} as follows:}
\begin{align*}
&\left(n'_1-\frac{1}{2}\right)\Tilde{A}_{\textcolor{black}{\bm{n'}-\bm{e_1}}}^{(2,1)}+\left(n'_2-\frac{1}{2}\right)\Tilde{A}_{\textcolor{black}{\bm{n'}-\bm{e_2}}}^{(2,1)}\\
&=\left(n'_1-\frac{1}{2}\right)\cdot\frac{2n'_1}{n'_1-\frac{1}{2}}\Tilde{A}_{\textcolor{black}{\bm{n'}}}^{(2,2)}+\left(n'_2-\frac{1}{2}\right)\cdot\frac{2n'_2}{n'_2-\frac{1}{2}}\Tilde{A}_{\textcolor{black}{\bm{n'}}}^{(2,2)}\\
&=2(n'_1+n'_2)\Tilde{A}_{\textcolor{black}{\bm{n'}}}^{(2,2)}
=2\cdot\frac{p-1}{2}\Tilde{A}_{\textcolor{black}{\bm{n'}}}^{(2,2)}
=-\Tilde{A}_{\textcolor{black}{\bm{n'}}}^{(2,2)}.
\end{align*}
When none of $n'_k$ are \textcolor{black}{nonzero}, $\textcolor{black}{\left| \bm{n'} \right|}=(p-1)/2$ and $n'_k\ge1\,(k=1,2,3).$ In this case, the reasoning is the same as in the last one.

Now \textcolor{black}{the claim \eqref{claim:surplusvanishing}} is verified, \textcolor{black}{we see that using the disjoint union $\supp[2][2] = \supp[2][1] \sqcup \left(\supp[2][2] \setminus \supp[2][1]\right),$ \eqref{eq:contiguity1} is equal to}
\begin{align*}
&-\frac{1}{2}\sum_{\textcolor{black}{\bm{n}\in}\supp[2][1]}\frac{1+\textcolor{black}{\left| \bm{n} \right|}}{\frac{1}{2}+\textcolor{black}{\left| \bm{n} \right|}}\Tilde{A}_{\textcolor{black}{\bm{n}}}^{(2,1)}\textcolor{black}{\bm{z}^{\bm{n}}}
+\sum_{\textcolor{black}{\bm{n}\in}\supp[2][2]\setminus\left(\supp[2][2] \setminus \supp[2][1]\right)}\Tilde{A}_{\textcolor{black}{\bm{n}}}^{(2,2)}\textcolor{black}{\bm{z}^{\bm{n}}}\\
=&-\frac{1}{2}\sum_{\textcolor{black}{\bm{n}\in}\supp[2][1]}\frac{1+\textcolor{black}{\left| \bm{n} \right|}}{\frac{1}{2}+\textcolor{black}{\left| \bm{n} \right|}}\Tilde{A}_{\textcolor{black}{\bm{n}}}^{(2,1)}\textcolor{black}{\bm{z}^{\bm{n}}}
+\sum_{\textcolor{black}{\bm{n}\in}\supp[2][1]}\Tilde{A}_{\textcolor{black}{\bm{n}}}^{(2,2)}\textcolor{black}{\bm{z}^{\bm{n}}}.
\end{align*}
Then, by \eqref{eq:ALinterchanging}, the last line above annihilates.

(2)
\textcolor{black}{
Let $\psi(z_1,z_2,z_3) \in {\mathbb{F}_p}[z_1,z_2,z_3]$ be
\[
(-1)^\frac{p-1}{2}\frac{i}{2}\cdot\left(\sum_{k=1}^3 (1-z_k)\partial_k c_{ip-2}-\dfrac{1}{2}(c_{ip-1}+c_{ip-2})\right).
\]
We show that $\psi$ is zero.}
For $i=1,2,$ and $\,\,k=1,2,3,$ we define
\begin{equation*}
\mathsf{U}^{(i,2)}_k:=\left(-\bm{e}_k+\supp[i][2]\right)\bigcap\supp[i][2],
\mathsf{V}^{(i,2)}_k:=\left(-\bm{e}_k+\supp[i][2]\right) \setminus \supp[i][2].
\end{equation*}
We use a similar strategy to (1), and \textcolor{black}{by simple computations} \textcolor{black}{$\psi$ is}
\begin{align}
&\sum_{k=1}^3\sum_{\textcolor{black}{\bm{n}\in}\mathsf{U}^{(i,2)}_k}\left(n_k+1\right)\Tilde{A}_{\textcolor{black}{\bm{n}}+\bm{e}_k}^{(i,2)}\textcolor{black}{\bm{z}^{\bm{n}}}
+\sum_{k=1}^3\sum_{\textcolor{black}{\bm{n}\in}\mathsf{V}^{(i,2)}_k}\left(n_k+1\right)\Tilde{A}_{\textcolor{black}{\bm{n}}+\bm{e}_k}^{(i,2)}\textcolor{black}{\bm{z}^{\bm{n}}}\notag\\
&-\sum_{\textcolor{black}{\bm{n}\in}\supp[i][2]}(\textcolor{black}{\frac{1}{2} + \left| \bm{n} \right|})\Tilde{A}_{\textcolor{black}{\bm{n}}}^{(i,2)}\textcolor{black}{\bm{z}^{\bm{n}}} \textcolor{black}{-\frac{1}{4}\sum_{\textcolor{black}{\bm{n}\in}\supp[i][1]}\Tilde{A}_{\bm{n}}^{(i,1)}\bm{z}^{\bm{n}}}.\label{eq:contiguity2}
\end{align}
Using the standard recurrence relations,
\begin{equation}
\sum_{k=1}^3\sum_{\textcolor{black}{\bm{n}\in}\mathsf{U}^{(i,2)}_k}\left(n_k+1\right)\Tilde{A}_{\textcolor{black}{\bm{n}}+\bm{e}_k}^{(i,2)}\textcolor{black}{\bm{z}^{\bm{n}}}
=\sum_{k=1}^3\sum_{\textcolor{black}{\bm{n}\in}\mathsf{U}^{(i,2)}_k}
    \frac{\frac{1}{2}+\textcolor{black}{\left| \bm{n} \right|}}{1+\textcolor{black}{\left| \bm{n} \right|}}\cdot\left(\frac{1}{2}+n_k\right)\Tilde{A}_{\textcolor{black}{\bm{n}}}^{(i,2)}\textcolor{black}{\bm{z}^{\bm{n}}}\label{eq:U_part}.
\end{equation}
Here, the proof splits into two cases regarding $i$.

\underline{Case $i=1.$}
We note $\mathsf{V}^{(1,2)}_k=-\bm{e}_k+\setsymbolin{\textcolor{black}{\bm{n}}}{\supp[1][2]}{n_k=0\ \text{or}\ \textcolor{black}{\left| \bm{n} \right|}=p}.$
Setting $\Tilde{\mathsf{V}}^{(1,2)}:=\setsymbolin{\textcolor{black}{\bm{n}}}{\supp[1][2]}{\textcolor{black}{\left| \bm{n} \right|}=p},$ we simplify
\begin{equation}
\sum_{k=1}^3\sum_{\textcolor{black}{\bm{n}\in}\mathsf{V}^{(1,2)}_k}\left(n_k+1\right)\Tilde{A}_{\textcolor{black}{\bm{n}}+\bm{e}_k}^{(1,2)}\textcolor{black}{\bm{z}^{\bm{n}}}
=\sum_{k=1}^3\sum_{\textcolor{black}{\bm{n}\in}-\bm{e}_k+\Tilde{\mathsf{V}}^{(1,2)}}\left(n_k+1\right)\Tilde{A}_{\textcolor{black}{\bm{n}}+\bm{e}_k}^{(1,2)}\textcolor{black}{\bm{z}^{\bm{n}}}.\label{eq:V-tilde2}
\end{equation}
If there existed some $\textcolor{black}{\bm{n'}=(n'_1,n'_2,n'_3)}\in\Tilde{\mathsf{V}}^{(1,2)}$ such that $n\textcolor{black}{'}_k=0$ for some $k\in\{1,2,3\},$ then $\textcolor{black}{\left| \bm{n'} \right|}\le2\cdot(p-1)/2=p-1,$ which contradicts $\textcolor{black}{\left| \bm{n'} \right|}=p.$ From this observation, for $k=1,2,3,$ we obtain
\begin{equation*}
-\bm{e}_k+\Tilde{\mathsf{V}}^{(1,2)}
= \setsymbolin{\textcolor{black}{\bm{n}}}{\supp[1][1]}{\begin{array}{l}
    \textcolor{black}{\left| \bm{n} \right|} = p-1,\\
    n_k \ne (p-1)/2
\end{array}}
=: \Tilde{\mathsf{V}}^{(1,1)}_k
\end{equation*}
Hence, by \eqref{eq:ALinterchanging2}, \textcolor{black}{\eqref{eq:V-tilde2} is equal to}
\begin{equation}
\sum_{k=1}^3\sum_{\textcolor{black}{\bm{n}\in}\Tilde{\mathsf{V}}^{(1,1)}_k}\left(n_k+1\right)\Tilde{A}_{\textcolor{black}{\bm{n}}+\bm{e}_k}^{(1,2)}\textcolor{black}{\bm{z}^{\bm{n}}}
=\sum_{k=1}^3\sum_{\textcolor{black}{\bm{n}\in}\Tilde{\mathsf{V}}^{(1,1)}_k}\frac{1}{2}\left(\frac{1}{2}+n_k\right)\Tilde{A}_{\textcolor{black}{\bm{n}}}^{(1,1)}\textcolor{black}{\bm{z}^{\bm{n}}}.\label{eq:V-tilde}
\end{equation}
Note that we have changed the Lauricella series' coefficients from $j=2$ to $j=1.$ The sum above will cancel out with a portion of $c_{p-1}$ multiplied by a certain value.
Moreover, since the inside of the sum regarding $k$ in the \textcolor{black}{right-hand side of the above formula} won't be affected even if $n_k=(p-1)/2,$ we can replace $\Tilde{\mathsf{V}}^{(1,1)}_k$ for $\Tilde{\mathsf{V}}^{(1,1)}:=\setsymbolin{\textcolor{black}{\bm{n}}}{\supp[1][1]}{\textcolor{black}{\left| \bm{n} \right|} = p-1},$
and \textcolor{black}{\eqref{eq:V-tilde} is}
\begin{equation*}
\sum_{k=1}^3\sum_{\textcolor{black}{\bm{n}\in}\Tilde{\mathsf{V}}^{(1,1)}}\frac{1}{2}\left(\frac{1}{2}+n_k\right)\Tilde{A}_{\textcolor{black}{\bm{n}}}^{(1,1)}\textcolor{black}{\bm{z}^{\bm{n}}}
=\sum_{\textcolor{black}{\bm{n}\in}\Tilde{\mathsf{V}}^{(1,1)}}\frac{1}{2}\left(\frac{3}{2}+\textcolor{black}{\left| \bm{n} \right|}\right)\Tilde{A}_{\textcolor{black}{\bm{n}}}^{(1,1)}\textcolor{black}{\bm{z}^{\bm{n}}}
=\sum_{\textcolor{black}{\bm{n}\in}\Tilde{\mathsf{V}}^{(1,1)}}\frac{1}{4}\Tilde{A}_{\textcolor{black}{\bm{n}}}^{(1,1)}\textcolor{black}{\bm{z}^{\bm{n}}}.
\end{equation*}
Next, we claim that we can replace $\mathsf{U}^{(1,2)}_k$ for $\supp[1][2]$  in the sum \eqref{eq:U_part}.
Note that $\mathsf{U}^{(1,2)}_k$ is described as $\setsymbolin{\textcolor{black}{\bm{n}}}{\supp[1][2]}{n_k\ne(p-1)/2,\ \textcolor{black}{\left| \bm{n} \right|}\ne(3p-3)/2}.$
If $n_k=(p-1)/2$, then $1/2+n_k=p/2=0$ in $\mathbb{F}_p$, and if $\textcolor{black}{\left| \bm{n} \right|}=(3p-3)/2,$ then $n_1=n_2=n_3=(p-1)/2$, whence the sum does not change if we replace $\mathsf{U}^{(1,2)}_k$ for $\supp[1][2]$. 

\textcolor{black}{From the above consideration, simple computations imply that \eqref{eq:contiguity2} is
\[
\sum_{\textcolor{black}{\bm{n}\in}\Tilde{\mathsf{V}}^{(1,1)}}\frac{1}{4}\Tilde{A}_{\bm{n}}^{(1,1)}\bm{z}^{\bm{n}}
+\sum_{\textcolor{black}{\bm{n}\in}\supp[1][2]}
    \frac{1}{2}\cdot\frac{\frac{1}{2}+\textcolor{black}{\left| \bm{n} \right|}}{1+\left| \bm{n} \right|}
    \Tilde{A}_{\bm{n}}^{(1,2)}\bm{z}^{\bm{n}}
-\frac{1}{4}\sum_{\textcolor{black}{\bm{n}\in}\supp[1][1]}\Tilde{A}_{\bm{n}}^{(1,1)}\bm{z}^{\bm{n}}.
\]}
\textcolor{black}{We rewrite this using $\supp[1][1]=\Tilde{\mathsf{V}}^{(1,1)}\sqcup\supp[1][2]$ as}
\begin{align*}
\sum_{\textcolor{black}{\bm{n}\in}\supp[1][2]}
    \frac{1}{2}\cdot\frac{\frac{1}{2}+\textcolor{black}{\left| \bm{n} \right|}}{1+\textcolor{black}{\left| \bm{n} \right|}}
    \Tilde{A}_{\textcolor{black}{\bm{n}}}^{(1,2)}\textcolor{black}{\bm{z}^{\bm{n}}}
-\frac{1}{2}\cdot\frac{1}{2}\sum_{\textcolor{black}{\bm{n}\in}\supp[1][2]}\Tilde{A}_{\textcolor{black}{\bm{n}}}^{(1,1)}\textcolor{black}{\bm{z}^{\bm{n}}}.
\end{align*}

Then, by \eqref{eq:ALinterchanging}, the \textcolor{black}{above formula is zero}.

\underline{Case $i=2.$}
We note \textcolor{black}{$\mathsf{V}^{(2,2)}_k$ is}
\begin{align*}
&-\bm{e}_k+\setsymbolin{\textcolor{black}{\bm{n}}}{\supp[2][2]}{n_k=0\ \text{or}\ \textcolor{black}{\left| \bm{n} \right|}=0}\\
=&\left(-\bm{e}_k+\setsymbolin{\textcolor{black}{\bm{n}}}{\supp[2][2]}{n_k=0}\right)\cup\{\textcolor{black}{-\bm{e}_k}\},
\end{align*}
\textcolor{black}{which implies $\displaystyle \sum_{k=1}^3\sum_{\textcolor{black}{\bm{n}\in}\mathsf{V}^{(2,2)}_k}\left(n_k+1\right)\Tilde{A}_{\bm{n}+\bm{e}_k}^{(2,2)}\bm{z}^{\bm{n}}=0.$}
Next, we note
\[
\mathsf{U}^{(2,2)}_k
=\setsymbolin{\textcolor{black}{\bm{n}}}{\supp[2][2]}{n_k\ne(p-1)/2,\ \textcolor{black}{\left| \bm{n} \right|}\ne(p-1)/2}.
\]
From similar consideration to the case of $i=1,$ we can replace $\mathsf{U}^{(2,2)}_k$ for $\supp[2][2].$
\textcolor{black}{After some simple computations, we see that \eqref{eq:contiguity2} is equal to}
\[
\sum_{\textcolor{black}{\bm{n}\in}\supp[2][1]}
    \frac{1}{2}\cdot\frac{\frac{1}{2}+\textcolor{black}{\left| \bm{n} \right|}}{1+\textcolor{black}{\left| \bm{n} \right|}}
    \Tilde{A}_{\textcolor{black}{\bm{n}}}^{(2,2)}\textcolor{black}{\bm{z}^{\bm{n}}}
-\frac{1}{2}\cdot\frac{1}{2}\sum_{\textcolor{black}{\bm{n}\in}\supp[2][1]}\Tilde{A}_{\textcolor{black}{\bm{n}}}^{(2,1)}\textcolor{black}{\bm{z}^{\bm{n}}}.
\]
Then, by \eqref{eq:ALinterchanging}, the \textcolor{black}{above formula is zero.}
\end{proof}

There are other ``contiguity relations".
Here are some examples, see Remark \ref{rem:OriginalContiguityRelations} and Expectation \ref{expect:DeterminantLike} below.

\begin{Rem}\label{rem:OriginalContiguityRelations}
We have the following two relations, which are easily predicted from \cite[Theorem 3.8.1]{Matsumoto} and Theorem \ref{thm:PDEinPositiveChar}:
for $i=1,2$,
\begin{enumerate}
\item[(1)]
$\displaystyle
-\left( \sum_{k=1}^3 {(1-z_k)\partial_k} -1 \right) c_{ip-2}
	= \left( \sum_{k=1}^3 {z_k(1-z_k)\partial_k} -\sum_{k=1}^3 {\dfrac{1}{2}
	z_k} +\dfrac{1}{2} \right) c_{ip-1},
$
\item[(2)]
$\displaystyle
\left( \sum_{k=1}^3 {z_k\partial_k} +\dfrac{1}{2} \right) c_{ip-2}
	= \left( \sum_{k=1}^3 {z_k\partial_k} +1 \right) c_{ip-1}.
$
\end{enumerate}
Indeed (1) follows from Theorem \ref{thm:ContiguityRelation}.
Indeed
the left-hand side of (1) is equal to
$(c_{ip-2}-c_{ip-1})/2$ by Theorem \ref{thm:ContiguityRelation} (2), and the right-hand side of (1)
is equal to $(c_{ip-2}-c_{ip-1})/2$ by Theorem \ref{thm:ContiguityRelation} (1).
We can prove (2) in the similar way as in  Theorem \ref{thm:ContiguityRelation}, but we omit its proof, since we do not use it to prove Theorem \ref{thm:MultiplicityOne}.
\end{Rem}
Although a computational experiment finds the following beautiful relation, it has not been proven.
\begin{Expect}\label{expect:DeterminantLike}
 For $i=1,2,3$, we have
\[
\dfrac{\partial c_{p-1}} {\partial z_i}(\lambda)\dfrac{\partial c_{2p-2}} {\partial z_i}(\lambda)-\dfrac{\partial c_{2p-1}} {\partial z_i}(\lambda)\dfrac{\partial c_{p-2}} {\partial z_i}(\lambda)=0
\]   
for  $\lambda:=(\lambda_1,\lambda_2,\lambda_3)\in V(c_{p-1},c_{p-2},c_{2p-1},c_{2p-2})$.
\end{Expect}

\subsection{Proof of the multiplicity-one theorem}
Let $\lambda:=(\lambda_1,\lambda_2,\lambda_3)$ be a point of $V(c_{p-1},c_{p-2},c_{2p-1},c_{2p-2})$.
We are concerned about the rank of the Jacobian matrix at $\lambda$ 
\begin{equation}\label{JacobianMatrix}
J:=(v_{1,1},v_{2,1},v_{1,2},v_{2,2}):=
\begin{pmatrix}
\dfrac{\partial c_{p-1}} {\partial z_1}(\lambda) & \dfrac{\partial c_{2p-1}} {\partial z_1}(\lambda) & \dfrac{\partial c_{p-2}} {\partial z_1}(\lambda) & \dfrac{\partial c_{2p-2}} {\partial z_1}(\lambda)\\
\dfrac{\partial c_{p-1}} {\partial z_2}(\lambda) & \dfrac{\partial c_{2p-1}} {\partial z_2}(\lambda) & \dfrac{\partial c_{p-2}} {\partial z_2}(\lambda) & \dfrac{\partial c_{2p-2}} {\partial z_2}(\lambda)\\
\dfrac{\partial c_{p-1}} {\partial z_3}(\lambda) & \dfrac{\partial c_{2p-1}} {\partial z_3}(\lambda) & \dfrac{\partial c_{p-2}} {\partial z_3}(\lambda) & \dfrac{\partial c_{2p-2}} {\partial z_3}(\lambda)
\end{pmatrix}.
\end{equation}

\begin{Lem}\label{lem:same_j}
$v_{1,j}$ and $v_{2,j}$ are linearly independent at $\lambda$ for $j = 1,2$.
\end{Lem}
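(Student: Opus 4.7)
The plan is to argue by contradiction. Suppose $v_{1,j}$ and $v_{2,j}$ are linearly dependent at $\lambda$, so that $\alpha v_{1,j}+\beta v_{2,j}=0$ for some $(\alpha,\beta)\in\overline{\mathbb F_p}^2\setminus\{(0,0)\}$. Set $g:=\alpha c_{p-j}+\beta c_{2p-j}$. By Theorem \ref{thm:PDEinPositiveChar} and linearity, $g$ satisfies $\diffchar{\ell}{j}g=0$ for $\ell=1,2,3$ and $\diffsimp{\ell}{m}g=0$ for $1\le\ell<m\le 3$. Since $\lambda\in V(c_{p-1},c_{p-2},c_{2p-1},c_{2p-2})$, we have $g(\lambda)=0$, and by the linear-dependence hypothesis also $\nabla g(\lambda)=0$. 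The goal is to force $g\equiv 0$; because $c_{p-j}$ has nonzero terms of total degree $\ge p-1$ while $c_{2p-j}$ has total degree $\le (p-1)/2$ (as read off from the supports $\supp[1][j]$ and $\supp[2][j]$ in \eqref{eq:supp}), the polynomials $c_{p-j}$ and $c_{2p-j}$ are linearly independent, so $g\equiv 0$ forces $\alpha=\beta=0$, a contradiction.

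The central step is showing $g=0$. Introduce local coordinates $w_k:=z_k-\lambda_k$ and expand $g=\sum_{m\ge 0}g_m$ with $g_m$ homogeneous of degree $m$ in $w$. The conditions $g(\lambda)=0$ and $\nabla g(\lambda)=0$ say $g_0=g_1=0$, so if $g\ne 0$ we set $N:=\min\{m:g_m\ne 0\}\ge 2$. Rewriting $\diffchar{\ell}{j}$ in the variables $w$, the coefficient $z_\ell(1-z_\ell)=\lambda_\ell(1-\lambda_\ell)+(1-2\lambda_\ell)w_\ell-w_\ell^2$, with analogous expansions for the other terms, shows that the operator decomposes as $L^{(-2)}+L^{(-1)}+L^{(0)}$ where $L^{(s)}$ shifts the homogeneous $w$-degree by $s$. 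The homogeneous part of $\diffchar{\ell}{j}g=0$ of degree $N-2$ then comes entirely from $L^{(-2)}g_N$, giving
\[
\lambda_\ell(1-\lambda_\ell)\pdd{w_\ell}g_N+\sum_{k\ne\ell}\lambda_k(1-\lambda_\ell)\genpdd{w_\ell}{w_k}g_N=0,
\]
and the same manipulation on $\diffsimp{\ell}{m}g=0$ yields $(\lambda_\ell-\lambda_m)\genpdd{w_\ell}{w_m}g_N=0$. By Proposition \ref{prop:SspImpliesNonsing}, $\lambda_\ell$, $1-\lambda_\ell$, and $\lambda_\ell-\lambda_m$ are all nonzero, so successive division produces $\genpdd{w_\ell}{w_m}g_N=0$ for every $\ell\ne m$ and then $\pdd{w_\ell}g_N=0$ for each $\ell$.

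To conclude $g_N=0$, observe that the constraint $n_k\le (p-1)/2$ in \eqref{eq:supp} bounds $\deg_{z_k}c_{ip-j}\le (p-1)/2<p$, and this bound is inherited by $g$ and by $g_N$ in each $w_k$. For a polynomial of degree at most $p-1$ in $w_\ell$, the condition $\pdd{w_\ell}g_N=0$ forces degree at most $1$ in $w_\ell$, since $d(d-1)\not\equiv 0\pmod p$ for $2\le d\le p-1$. Hence $g_N$ has degree $\le 1$ in each variable; being homogeneous of degree $N\ge 2$ in three variables, it is a linear combination of $w_1w_2,\,w_1w_3,\,w_2w_3$ when $N=2$, a scalar multiple of $w_1w_2w_3$ when $N=3$, or zero when $N\ge 4$. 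The mixed conditions $\genpdd{w_\ell}{w_m}g_N=0$ then eliminate the remaining coefficients in the $N=2$ and $N=3$ cases, so $g_N=0$, contradicting the choice of $N$. The main obstacle is the positive-characteristic step $\pdd{w_\ell}g_N=0\Rightarrow \deg_{w_\ell}g_N\le 1$, which is precisely where the explicit truncation bound $n_k\le (p-1)/2$ is essential: in characteristic $p$ the kernel of $\pdd{w_\ell}$ normally contains $p$-th powers, but the truncation rules these out.
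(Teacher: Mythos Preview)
Your argument is correct and follows the same strategy as the paper's proof: form $g=\alpha c_{p-j}+\beta c_{2p-j}$, use the shared differential equations from Theorem~\ref{thm:PDEinPositiveChar} to force the Taylor expansion at $\lambda$ to vanish term by term, and contradict the disjointness of the degree supports of $c_{p-j}$ and $c_{2p-j}$. The paper merely gestures at this step as ``an inductive argument''; you make explicit the leading-symbol computation, the appeal to Proposition~\ref{prop:SspImpliesNonsing} (so that $\lambda_\ell$, $1-\lambda_\ell$, $\lambda_\ell-\lambda_m$ are units), and the degree bound $\deg_{w_k}\le(p-1)/2$ needed to rule out $p$-th powers in the kernel of $\partial_{w_\ell}^2$, which are exactly the details the paper leaves to the reader.
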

\begin{proof}
We consider the expansion of $c_{ip-j}$
at $\lambda:=(\lambda_1,\lambda_2,\lambda_3)$:
As the $z_\ell$-degree of $c_{ip-j}$ ($\ell = 1,2,3$) is at most $(p-1)/2$, 
we can write $c_{ip-j}$ as
\[
\sum_{k_1,k_2,k_3} \frac{1}{k_1!k_2!k_3!}\frac{\partial^{k_1+k_2+k_3}c_{ip-j}}{\partial z_1^{k_1}\partial z_2^{k_2}\partial z_3^{k_3}} (\lambda)
(z_1-\lambda_1)^{k_1}
(z_2-\lambda_2)^{k_2}
(z_3-\lambda_3)^{k_3}.
\]

If $v_{1,j}$ and $v_{2,j}$ were linearly dependent at $\lambda$, then
the expansion at $\lambda$ of $c_{p-j}$ is a scalar multiple of that of $c_{2p-j}$ up to degree one.
Since $c_{p-j}$ and $c_{2p-j}$ satisfy the same partial differential equations in Theorem \ref{thm:PDEinPositiveChar},
an inductive argument shows that 
$c_{p-j}$ is a scalar multiple of that of $c_{2p-j}$.
This is absurd, as the total degrees of $c_{p-j}$ and $c_{2p-j}$ are different.
\end{proof}
\begin{Thm}
The rank of $J$ is three.
\end{Thm}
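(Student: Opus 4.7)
The plan is to exploit the two contiguity relations established in Theorem \ref{thm:ContiguityRelation} to confine each pair $\{v_{1,j}, v_{2,j}\}$ to an explicit two-dimensional hyperplane of $K^3$, and then to verify that these two hyperplanes are distinct.

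Since all the $c_{ip-j}$ vanish at $\lambda$, evaluating both sides of the two relations in Theorem \ref{thm:ContiguityRelation} at $\lambda$ yields, for $i=1,2$,
\[
\sum_{k=1}^3 (\lambda_k^2 - \lambda_k) \frac{\partial c_{ip-1}}{\partial z_k}(\lambda) = 0
\quad \text{and} \quad
\sum_{k=1}^3 (1 - \lambda_k) \frac{\partial c_{ip-2}}{\partial z_k}(\lambda) = 0.
\]
Equivalently, setting $w_1 := (\lambda_1^2-\lambda_1,\lambda_2^2-\lambda_2,\lambda_3^2-\lambda_3)$ and $w_2 := (1-\lambda_1,1-\lambda_2,1-\lambda_3)$, the columns $v_{1,1}, v_{2,1}$ lie in the hyperplane $H_1 := w_1^{\perp}$ and the columns $v_{1,2}, v_{2,2}$ lie in the hyperplane $H_2 := w_2^{\perp}$.

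By Proposition \ref{prop:SspImpliesNonsing}, the $\lambda_k$ are pairwise distinct and different from $0$ and $1$, so both $w_1$ and $w_2$ are nonzero, making $H_1$ and $H_2$ genuine two-dimensional subspaces of $K^3$. Combined with Lemma \ref{lem:same_j}, this shows $\{v_{1,1}, v_{2,1}\}$ is a basis of $H_1$ and $\{v_{1,2}, v_{2,2}\}$ is a basis of $H_2$. Hence the column span of $J$ equals $H_1 + H_2$, and proving $\mathrm{rank}\,J = 3$ reduces to verifying $H_1 \neq H_2$.

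To see this, it suffices to check that $w_1$ and $w_2$ are not proportional. Writing $w_1 = (-\lambda_1(1-\lambda_1), -\lambda_2(1-\lambda_2), -\lambda_3(1-\lambda_3))$ and using $1-\lambda_k \neq 0$, a scalar identity $w_1 = c \cdot w_2$ would force $-\lambda_k = c$ for every $k$, contradicting the distinctness of the $\lambda_k$. Thus $H_1 \neq H_2$, so any two such distinct planes in $K^3$ sum to all of $K^3$, and $J$ has rank $3$. The main leverage here is Theorem \ref{thm:ContiguityRelation}, which at a superspecial point is precisely what packages the derivatives of $c_{ip-j}$ into the ``right'' hyperplane; combined with the geometric input of Proposition \ref{prop:SspImpliesNonsing} there is essentially no obstacle left.
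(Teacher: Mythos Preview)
Your proof is correct and rests on the same three ingredients as the paper's argument: the two contiguity relations of Theorem \ref{thm:ContiguityRelation}, Lemma \ref{lem:same_j}, and Proposition \ref{prop:SspImpliesNonsing}. The organization, however, is different and cleaner. The paper argues by contradiction: assuming $\operatorname{rank} J \le 2$, it first establishes that every $2\times 2$ minor of $(v_{1,j},v_{2,j})$ is nonzero, then uses the contiguity relations to produce two independent linear dependencies among the \emph{row} vectors of $J$, forcing two rows to be proportional and contradicting the minor claim. You instead read the contiguity relations as placing the \emph{column} pairs $\{v_{1,1},v_{2,1}\}$ and $\{v_{1,2},v_{2,2}\}$ inside the explicit planes $w_1^{\perp}$ and $w_2^{\perp}$, invoke Lemma \ref{lem:same_j} to see each pair spans its plane, and finish by checking $w_1,w_2$ are not proportional. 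The two arguments are dual to one another, but your direct computation of the column span as $H_1+H_2$ avoids both the contradiction setup and the intermediate $2\times 2$ submatrix claim, and makes the geometric content more transparent.
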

\begin{proof}
We claim that 
all $2\times 2$ submatrices of $(v_{1,1},v_{2,1})$
and $(v_{1,2},v_{2,2})$ are of rank $2$.
This follows from Theorem \ref{thm:ContiguityRelation}.
Indeed, otherwise, for example \textcolor{black}{consider}
\begin{equation}\label{eg:rank1}
\textcolor{black}{A:=}\begin{pmatrix}
  \dfrac{\partial c_{p-2}} {\partial z_1}(\lambda) & \dfrac{\partial c_{2p-2}} {\partial z_1}(\lambda)\\
  \dfrac{\partial c_{p-2}} {\partial z_2}(\lambda) & \dfrac{\partial c_{2p-2}} {\partial z_2}(\lambda) 
\end{pmatrix}
\end{equation}
\textcolor{black}{and assume that $A$ were of} rank $\leq 1$.
Then, \textcolor{black}{since  
\begin{equation}\label{eq:5-23}
(1-\lambda_1)\dfrac{\partial c_{ip-2}} {\partial z_1}(\lambda)
+ (1-\lambda_2)\dfrac{\partial c_{ip-2}} {\partial z_2}(\lambda)
+ (1-\lambda_3)\dfrac{\partial c_{ip-2}} {\partial z_3}(\lambda)
=0
\end{equation}
for $i=1,2$ by Theorem \ref{thm:ContiguityRelation} (2), the vector} $\begin{pmatrix}\dfrac{\partial c_{p-2}} {\partial z_3}(\lambda) & \dfrac{\partial c_{2p-2}} {\partial z_3}(\lambda)\end{pmatrix}$ is a linear combination of
the two row vectors of \eqref{eg:rank1}
\textcolor{black}{by $\lambda_3\ne 1$ (Proposition \ref{prop:SspImpliesNonsing})}.
This means that $(v_{1,2},v_{2,2})$ is of rank $\leq 1$.
This contradicts Lemma \ref{lem:same_j}.
\textcolor{black}{Hence ${\rm rank}\, A = 2$ holds.}

Assume that the rank of $J$ were less than or equal to $2$.
Let $w_1, w_2$ and $w_3$ be the first, second and third row vectors of $J$. 
\textcolor{black}{By \eqref{eq:5-23}}, the latter two entries of
$(1-\lambda_1)w_1 + (1-\lambda_2)w_2 + (1-\lambda_3) w_3$
are zero, \textcolor{black}{i.e.,
\[
\begin{pmatrix}
w_1\\w_2\\(1-\lambda_1)w_1 + (1-\lambda_2)w_2 + (1-\lambda_3) w_3
\end{pmatrix}
= \left(
\begin{array}{@{}c c@{}}
\begin{matrix}
* & *\\
* & *
\end{matrix} & A \\
\begin{matrix}
* & *
\end{matrix} & \begin{matrix}
0 & 0
\end{matrix}
\end{array}
\right).
\]
By ${\rm rank}\, J \leq 2$ and  ${\rm rank}\, A = 2$, we get}
\begin{equation}\label{eq:5-24}
(1-\lambda_1)w_1 + (1-\lambda_2)w_2 + (1-\lambda_3) w_3 = 0.
\end{equation}
The same argument with Theorem \ref{thm:ContiguityRelation} (1) shows
\begin{equation}\label{eq:5-25}
(\lambda_1-\lambda_1^2)w_1 + (\lambda_2-\lambda_2^2)w_2 + (\lambda_3-\lambda_3^2) w_3 = 0.
\end{equation}
Eliminating $w_3$ by these equations, 
\textcolor{black}{i.e., considering $(\lambda_3-\lambda_3^2)\times$ \eqref{eq:5-24}$-(1-\lambda_3)\times$ \eqref{eq:5-25}},
we get that $w_1$ and $w_2$ are linearly dependent, where we used the fact that
\textcolor{black}{
\[
(\lambda_3-\lambda_3^2)(1-\lambda_i)-(1-\lambda_3)(\lambda_i-\lambda_i^2)
=(\lambda_3-\lambda_i)(1-\lambda_3)(1-\lambda_i)
\]
for $i=1,2$ are} not zero  by Proposition \ref{prop:SspImpliesNonsing}.
This contradicts \textcolor{black}{${\rm rank}\, A = 2$}.
\end{proof}

\if0
It follows from the assumption and this lemma  that
$\langle v_{1,1}, v_{2,1}\rangle = \langle v_{1,2}, v_{2,2}\rangle$.
Let us show that this makes a contradiction. 
Write
\begin{equation}\label{ImpossibleEquation}
v_{1,2} = s v_{1,1} + t v_{2,1}
\end{equation}
for $s, t\in k$.
Put
\[
Z_i = z_i-\gamma_i.
\]
Then $\diffchar{l}{j}$ with respect to $(Z_1,Z_2,Z_3)$ is
\begin{eqnarray*}
{\mathbb{D}}_l^{(j)}
&:=&(Z_l+\gamma_l)(1-Z_l-\gamma_l)\pdd{Z_l}+\sum_{\substack{1 \le k \le 3,\\k \neq l}}
{ (Z_k+\gamma_k)(1-Z_l-\gamma_l)\genpdd{Z_l}{Z_k} }\\
&&+\left(3-j-(4-j)(Z_l+\gamma_l)\right)\pd{Z_l}
-\dfrac{1}{2}\sum_{\substack{1 \le k \le 3,\\k \neq l}}{ (Z_k+\gamma_k) \pd{Z_k} }-\dfrac{1}{2}\left(\dfrac{5}{2}-j\right).
\end{eqnarray*}
{\color{red}
The equation \eqref{ImpossibleEquation} says that
$c_{p-2} = s c_{p-1}+ tc_{2p-1}$ holds up to degree one.
But the first two terms of $\mathbb{D}_l^{(j)}$ is independent of $j$.
We want to find $S, T\in k[Z_1,Z_2,Z_3]$ so that
$c_{p-2} = S c_{p-1}+ T c_{2p-1}$ holds.
}
\begin{Rem}
By looking at the  $Z_1^{n_1}Z_2^{n_2}Z_2^{n_2}$-coefficient
of the differential equations ${\mathcal D_{ij}}$ obtained from
\[
(Z_i - Z_j + \gamma_i-\gamma_j)\dfrac{\partial^2}{\partial Z_i\partial Z_j} - b_j \dfrac{\partial}{\partial Z_i} + b_i\dfrac{\partial}{\partial Z_j},
\]
({\color{red} Is this DE derived from $\mathcal D_l$ ($l=1,2,3$)?})
we have the recurrence relations:
\begin{align*}
&(n_1+1)(n_2+1)(\gamma_1-\gamma_2)\alpha_{n_1+1,n_2+1,n_3}\\
&+(b_1+n_1)(n_2+1)\alpha_{n_1,n_2+1,n_3}\\
&-(b_2+n_2)(n_1+1)\alpha_{n_1+1,n_2,n_3}\\
=&0
\end{align*}
for $(i,j)=(1,2)$. We have similar recurrence relations for $(i,j)=(1,3)$ and $(2,3)$:
\begin{align*}
&(n_1+1)(n_3+1)(\gamma_1-\gamma_3)\alpha_{n_1+1,n_2,n_3+1}\\
&+(b_1+n_1)(n_3+1)\alpha_{\nnn+1}\\
&-(b_3+n_3)(n_1+1)\alpha_{n_1+1,n_2,n_3}\\
=&0
\end{align*}
for $(i,j)=(1,3)$.

\end{Rem}

\vspace{5cm}

If a polynomial $f$ with coefficients $\alpha^{i,j}_{\nnn} \in k$ of its $(\nnn)\text{-term}$ satisfies the differential equation
\[
\mathbb{D}_1^{(j)} f = 0,
\]
then we can obtain the following recurrence relation: 
\begin{align*}
&\gamma_1(1-\gamma_1)(n_1+2)(n_1+1)\alpha^{i,j}_{n_1+2,n_2,n_3}\\ &-\gamma_1\gamma_2(n_1+1)(n_2+1)\alpha^{i,j}_{n_1+1,n_2+1,n_3}\\ &-\gamma_1\gamma_3(n_1+1)(n_3+1)\alpha^{i,j}_{n_1+1,n_2,n_3+1}\\
&(n_1+1)\left( (c+n_1+n_2+n_3) -\gamma_1(n_1+n_2+n_3+a+b_1+1) \right)\alpha^{i,j}_{n_1+1,n_2,n_3}\\ &-b_1\gamma_2(n_2+1)\alpha^{i,j}_{n_1,n_2+1,n_3}\\
&-b_1\gamma_3(n_3+1)\alpha^{i,j}_{\nnn+1}\\
&-(b_1+n_1)(a+n_1+n_2+n_3)\alpha^{i,j}_{\nnn}\\
=&0
\end{align*}
for each $(\nnn) \in \mathbb{Z}^{3}_{\ge 0}$.

{\color{red}
\begin{align*}
&\gamma_1(1-\gamma_1)(n_1+2)(n_1+1)\alpha^{i,j}_{n_1+2,n_2,n_3}\\
&-\gamma_1\gamma_2 \dfrac{-(b_1+n_1)(n_2+1)\alpha_{n_1,n_2+1,n_3}+(b_2+n_2)(n_1+1)\alpha_{n_1+1,n_2,n_3}}{\gamma_1-\gamma_2}\\
&-\gamma_1\gamma_3\dfrac{-(b_1+n_1)(n_3+1)\alpha_{\nnn+1}+(b_3+n_3)(n_1+1)\alpha_{n_1+1,n_2,n_3}}{\gamma_1-\gamma_3}\\
&+(n_1+1)\left( (c+n_1+n_2+n_3) -\gamma_1(n_1+n_2+n_3+a+b_1+1) \right)\alpha^{i,j}_{n_1+1,n_2,n_3}\\ &-b_1\gamma_2(n_2+1)\alpha^{i,j}_{n_1,n_2+1,n_3}\\
&-b_1\gamma_3(n_3+1)\alpha^{i,j}_{\nnn+1}\\
&-(b_1+n_1)(a+n_1+n_2+n_3)\alpha^{i,j}_{\nnn}\\
=&\\
&\gamma_1(1-\gamma_1)(n_1+2)(n_1+1)\alpha^{i,j}_{n_1+2,n_2,n_3}\\
&+ A\alpha^{i,j}_{n_1+1,n_2,n_3}\\ 
&+\dfrac{\gamma_2(n_2+1)(b_1\gamma_2+n_1\gamma_1)}{\gamma_1-\gamma_2}\alpha^{i,j}_{n_1,n_2+1,n_3}\\
&+\dfrac{\gamma_3(n_3+1)(b_1\gamma_3+n_1\gamma_1)}{\gamma_1-\gamma_3}\alpha^{i,j}_{\nnn+1}\\
&-(b_1+n_1)(a+n_1+n_2+n_3)\alpha^{i,j}_{\nnn}\\
\end{align*}
for each $(\nnn) \in \mathbb{Z}^{3}_{\ge 0}$.
}
Symmetrically, we have two more relations, corresponding to the equations
\[
\mathbb{D}_l^{(j)} f = 0 \quad (l = 2,3).
\]

\vspace{2cm}

Set ${\frak m}= (z_1-\gamma_1, z_2-\gamma_2, z_3-\gamma_3)$.
Then we have
\[
c_{p-2} \equiv s c_{p-1} + t c_{2p-1} \mod {\frak m}^2.
\]
But at $\gamma$,
\begin{eqnarray*}
\diffchar{l}{1}c_{p-2}
&=&-(\diffchar{l}{2} - \diffchar{l}{1})c_{p-2}
=-(\gamma_l-1)\dfrac{\partial c_{p-2}}{\partial z_l} - \dfrac{1}{2}c_{p-2} = -(\gamma_l-1)v_{1,2}[l]
\end{eqnarray*}
On the other hand, at $\gamma$, let us find $s_i$ and $t_i$ so that
\begin{equation}\label{desiredST}
  \diffchar{l}{1} (S c_{p-1} + T c_{2p-1}) = -(\gamma_l-1)v_{1,2}[l]
\end{equation}
for all $l=1,2,3$,
where 
\begin{eqnarray*}
S&=&s+s_1(z_1-\gamma_1)+s_2(z_2-\gamma_2)+s_3(z_3-\gamma_3)\\
T&=&t+t_1(z_1-\gamma_1)+t_2(z_2-\gamma_2)+t_3(z_3-\gamma_3).
\end{eqnarray*}
At $\gamma$,
\begin{eqnarray*}
\diffchar{l}{1} (S c_{p-1} + T c_{2p-1}) &=& 
\diffchar{l}{1}(S)  c_{p-1} + \diffchar{l}{1}(T)  c_{2p-1}
+S\diffchar{l}{1}(c_{p-1})+ T \diffchar{l}{1}(c_{2p-1})\\
&&+\sum_{k} \square_{k} \dfrac{d c_{p-1} }{dz_{k}} 
+\triangle_{k} \cdot \dfrac{d c_{2p-1}}{dz_{k}}  \\
&\overset{\text{at } \gamma}{=}&
\sum_{k} \square_{k} \dfrac{d c_{p-1} }{dz_{k}} 
+\triangle_{k} \cdot \dfrac{d c_{2p-1}}{dz_{k}} 
\end{eqnarray*}
If we can find $S$ and $T$ so that \eqref{desiredST}, then
\[
D_l^{(1)} (c_{p-2}-Sc_{p-1}-Tc_{2p-1}) = 0
\]
at $\gamma$ for all $l=1,2,3$.
Does this imply $c_{p-2}=Sc_{p-1}+Tc_{2p-1}$?

Similarly, does we have a contradition from
\[
m c_{p-2} + nc_{2p-2} \equiv s c_{p-1} + t c_{2p-1} \mod {\frak m}^2?
\]

\noindent{\bf Problem:} When \eqref{JacobianMatrix}
is of rank $< 3$,
find series $S$ and $T$
\begin{eqnarray*}
S&=&s_0+s_1(z_1-\gamma_1)+s_2(z_2-\gamma_2)+s_3(z_3-\gamma_3)+\cdots\\
T&=&t_0+t_1(z_1-\gamma_1)+t_2(z_2-\gamma_2)+t_3(z_3-\gamma_3)+ \cdots
\end{eqnarray*}
such that $c_{p-2} \equiv Sc_{p-1}+Tc_{2p-1}\ \text{mod}\ (z_1-\gamma_1,z_2-\gamma_2,z_3-\gamma_3)^2$ and $\phi:=Sc_{p-1}+Tc_{2p-1}$ is a solution of $\mathcal D_l^{(2)}\phi =0$
for all $l=1,2,3$. First check the existence of $s_1,s_2,s_3,t_1,t_2,t_3$.


{\color{blue}
If \eqref{JacobianMatrix}
were of rank $< 3$, by [Lemma 5.4], there would be a pair of constants $s,t \in k$ which are not both zero and satisfy
\begin{equation}\label{Linear_assumption}
    v_{1,2}=sv_{1,1}+tv_{2,1}.
\end{equation}
\noindent{\bf Claim:}
Under the assumption of \eqref{JacobianMatrix} being of rank $<3$,
\begin{equation}\label{Contradictory_claim}
v_{1,2}=
\left(
\begin{array}{@{\,}c@{\,}}
    0\\
    0\\
    0
\end{array}
\right).
\end{equation}

\begin{proof}
Let
\[
C:=\gamma_{p-2}-(s\gamma_{p-1}+t\gamma_{2p-1}).
\]
From the assumption \eqref{Linear_assumption},
\begin{equation}\label{First_order}
    \dfrac{\partial C}{\partial z_\ell} (\lambda) = 0 \quad (\ell = 1,2,3).
\end{equation}
Since $\diffsimp{k}{k'}\gamma_{ip-j}=0 \;(i,j=1,2)$ for $1\le k<k'\le 3$, $\diffsimp{k}{k'}C=0$ for $1\le k<k'\le 3$,
and together with \eqref{First_order} it follows that
\[
    \dfrac{\partial^2 C}{\partial z_k \partial z_k'} (\lambda) = 0\quad\text{for}\;1\le k<k'\le 3.
\]
\end{proof}
}

{\color{red} Memo:
\[
C=\gamma_{p-2} -s \gamma_{p-1} - t \gamma_{2p-1}
\]
\[
D_l^{(1)}
\]
}

Contiguity relations:
\[-2 \left( \sum_{k=1}^3 {(1-z_k)\partial_k} -1 \right) \gamma_{ip-2}
	= \left( \sum_{k=1}^3 {z_k(1-z_k)\partial_k} -\sum_{k=1}^3 {\dfrac{1}{2}
	z_k} +\dfrac{1}{2} \right) \gamma_{ip-1}
\]
\[2\left( \sum_{k=1}^3 {z_k\partial_k} +\dfrac{1}{2} \right) \gamma_{ip-2}
	= \left( \sum_{k=1}^3 {z_k\partial_k} +1 \right) \gamma_{ip-1}\quad(i=1,2)
\]

\[u_{i,j}:=\sum_{k=1}^3 {(1-z_k^2)\partial_k \gamma_{ip-j}}\quad(i,j=1,2)\]

(True?)
At $\lambda \in V(c_{p-1},c_{p-2},c_{2p-1},c_{2p-2})$:
\[\left(\sum_{k=1}^3 (z_k^2-z_k)\partial_k\right) \gamma_{ip-1} =0\quad(i=1,2)\]
\[\left(\sum_{k=1}^3 (1-z_k)\partial_k\right) \gamma_{ip-2} =0\quad(i=1,2)\]

{\color{red}We want to prove
\[
c_{p-2} \in (c_{p-1}, c_{2p-1}),
\]
i.e., $c_{p-2} = S c_{p-1} + T c_{2p-1}$ for some $S, T$ in $k[z_1,z_2,z_3]$.}
Write
\[
S = \sum s_{i_1,i_2,i_3}(z-\gamma_1)^{i_1}(z-\gamma_2)^{i_2}(z-\gamma_3)^{i_3}.
\]
with $s_{0,0,0}=s$ and  $t_{0,0,0}=t$.

Put $c_{p-2}^{[1]}:=c_{p-2} - s_{0,0,0}c_{p-1}- t_{0,0,0}c_{2p-1}$.

At $\gamma$, we have
\begin{eqnarray*}
\diffchar{l}{1}c_{p-2}^{[1]}
&=&-(\diffchar{l}{2} - \diffchar{l}{1})c_{p-2}
=-(\gamma_l-1)\dfrac{\partial c_{p-2}}{\partial z_l} - \dfrac{1}{2}c_{p-2} = -(\gamma_l-1)v_{1,2}[l]\\
&=&-(\gamma_l-1)(sv_{1,1}[l]+tv_{2,1}[l]).
\end{eqnarray*}
\[
\diffchar{l}{1} (S c_{p-1} + T c_{2p-1})
=\diffchar{l}{1}(S) c_{p-1} + \diffchar{l}{1}(T) c_{2p-1}
\]
Determine the degree-one part of $S$ so that
\[
\diffchar{l}{1}(S c_{p-1} + T c_{2p-1}) \equiv -(\gamma_l-1)(sv_{1,1}[l]+tv_{2,1}[l]).
\]
\[
c_{p-2} \equiv s c_{p-1} + t c_{2p-1} +
(s_{2,1}z_1+s_{2,2}z_2+s_{2,3}z_3)c_{p-1}
+ (t_{2,1}z_1+t_{2,2}z_2+t_{2,3}z_3)c_{2p-1} \mod {\frak m}^3
\]
Find $s_{i,j}$ and $t_{i,j}$ so that $\diffchar{l}{1}c_{p-2} = "0"$.
\fi

\end{document}